\documentclass[finalversion]{FPSAC2017-arxiv}

\articlenumber{62}

\usepackage{lipsum, float,enumerate}

\newtheorem{theorem}{Theorem}[section]
\newtheorem{lemma}[theorem]{Lemma}
\newtheorem{definition}[theorem]{Definition}
\newtheorem{remark}[theorem]{Remark}

\newtheorem{corollary}[theorem]{Corollary}

\numberwithin{equation}{section}
\numberwithin{figure}{section}
\numberwithin{table}{section}

\def\R{\mathbb{R}}

\def\N{\mathbb{N}}
\def\calh{\mathcal H}
\newcommand{\bp}{\ensuremath{\mathbb P}}

\DeclareMathOperator{\Id}{\ensuremath{Id}}

\newcommand\blfootnote[1]{%
  \begingroup
  \renewcommand\thefootnote{}\footnote{#1}%
  \addtocounter{footnote}{-1}%
  \endgroup
}

\title{The critical exponent: a novel graph invariant}

\author{Dominique Guillot\thanks{\href{mailto:dguillot@udel.edu}{dguillot@udel.edu}}\addressmark{1}, Apoorva
Khare\thanks{\href{mailto:khare@stanford.edu}{khare@stanford.edu}}\addressmark{2},
\and Bala Rajaratnam\thanks{\href{mailto:brajaratnam01@gmail.com}{brajaratnam01@gmail.com}}\addressmark{3}}

\address{\addressmark{1}University of Delaware, Newark, DE, USA \\
\addressmark{2}Stanford University, Stanford, CA, USA \\
\addressmark{3}University of California Davis, Davis, CA, USA}

\received{\today}

\abstract{
A surprising result of FitzGerald and Horn (1977) shows that
$A^{\circ \alpha} := (a_{ij}^\alpha)$
is positive semidefinite (p.s.d.) for every entrywise
nonnegative $n \times n$ p.s.d.~matrix $A = (a_{ij})$ if
and only if $\alpha$ is a positive integer or $\alpha \geq n-2$.
Given a graph $G$, we consider the refined problem
of characterizing the set $\mathcal{H}_G$ of entrywise
powers preserving positivity for matrices with a zero pattern encoded by 
$G$. Using algebraic and combinatorial methods, we
study how the geometry of $G$ influences the set
$\mathcal{H}_G$. Our treatment provides new and exciting connections
between combinatorics and analysis, and leads us
to introduce and compute a new graph invariant called the
\textit{critical exponent}.
}

\resume{
Un r\'esultat surprenant de FitzGerald et Horn (1977) d\'emontre que $A^{\circ \alpha} := (a_{ij}^\alpha)$ est semi-d\'efinie positive pour chaque matrice semi-d\'efinie positive de dimension $n$ avec des entr\'ees non-n\'egatives si et seulement si $\alpha$ est un entier positif ou $\alpha \geq n-2$. Pour un graph $G$ donn\'e, nous consid\'erons une g\'en\'eralization naturelle du probl\`eme en \'etudiant l'ensemble $\mathcal{H}_G$ de puissances pr\'eservant la positivit\'e des matrices ayant une structure de z\'eros encod\'ee par $G$. \`A l'aide de m\'ethodes alg\'ebriques et combinatoires, nous analysons de quelle fa\c con la g\'eometrie du graph $G$ d\'etermine l'ensemble $\mathcal{H}_G$. Notre travail fournit de nouvelles connexions excitantes entre la combinatoire et l'analyse, et nous m\`ene \`a d\'efinir et calculer un nouvel invariant que l'on nomme l'{\it exposant critique} d'un graph.
}

\keywords{Matrices with structure of zeros, chordal graphs, entrywise
positive maps, positive semidefiniteness, Loewner ordering, fractional
Schur powers}

\begin{document}

\maketitle

\section{Introduction and main results}

Let\blfootnote{D.G., A.K., and B.R. are partially supported by the following: US Air
Force Office of Scientific Research grant award FA9550-13-1-0043, US
National Science Foundation under grant DMS-0906392, DMS-CMG 1025465,
AGS-1003823, DMS-1106642, DMS-CAREER-1352656, Defense Advanced Research
Projects Agency DARPA YFA N66001-11-1-4131, the UPS Foundation,
SMC-DBNKY, an NSERC postdoctoral fellowship, and the University of
Delaware Research Foundation.} $\N$ denote the set of positive integers.
Given $n \in \N$ and $I \subset \R$, let $\bp_n(I)$ denote the set of
symmetric positive semidefinite $n \times n$ matrices with entries in
$I$. Given two $n \times n$ matrices $A = (a_{ij})$ and $B =
(b_{ij})$, their Hadamard (or Schur, or entrywise) product, denoted by $A
\circ B$, is defined by $A \circ B  := (a_{ij} b_{ij})$. Note that $A
\circ B$ is a principal submatrix of the tensor product $A \otimes B$. As
a consequence, if $A$ and $B$ are positive semidefinite, then so is $A
\circ B$. This result is known in the literature as the \textit{Schur
product theorem}.

Given $\alpha \in \R$, we denote the entrywise $\alpha$th power of a
matrix $A$ with nonnegative entries by $A^{\circ \alpha} :=
(a_{ij}^\alpha)$, where we define $0^\alpha := 0$ for all $\alpha$.
By the Schur product theorem, $A^{\circ k}$ is positive
(semi)definite for all positive (semi)definite matrices $A$ and all $k
\in \N$. It is natural to ask if other real powers have the same
property. This problem was resolved by FitzGerald and
Horn, who revealed a surprising phase transition phenomenon.

\begin{theorem}[{FitzGerald--Horn \cite{FitzHorn}}]\label{Tcomp}
Let $n \geq 2$.
\begin{enumerate}
\item[1)] If $\alpha \in \N$ or $\alpha \geq n-2$, then
$A^{\circ \alpha} \in \bp_n(\R)$ for all $A \in \bp_n([0,\infty))$.
\item[2)] If $\alpha \in (0, n-2) \setminus \N$, then there exists a
matrix $A \in \bp_n([0,\infty))$ such that $A^{\circ \alpha} \not\in
\bp_n(\R)$.
\end{enumerate}
\end{theorem}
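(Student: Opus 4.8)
The plan is to prove the two assertions by separate methods. For the positivity statement (1) I would reduce everything to the Schur product theorem together with an induction on the matrix size $n$; for the sharpness statement (2) I would exhibit a single explicit perturbative family of matrices whose entrywise power fails to be positive semidefinite.

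For part (1), the integer case is immediate: $A^{\circ k} = A \circ \cdots \circ A$ ($k$ factors) is p.s.d.\ by repeated use of the Schur product theorem. For real $\alpha \geq n-2$, the key observation is the factorization $A^{\circ \alpha} = A \circ A^{\circ(\alpha-1)}$: since $A$ is p.s.d., the Schur product theorem shows that $A^{\circ(\alpha-1)}$ p.s.d.\ forces $A^{\circ\alpha}$ p.s.d. Thus the set of admissible exponents is closed under adding $1$, and it suffices to treat the window $\alpha \in [n-2, n-1)$, every larger exponent being reached by peeling off integers. I would prove the window statement by induction on $n$. The base case $n=2$ (threshold $0$) is a one-line determinant computation: for $\alpha \geq 0$ one has $a^\alpha c^\alpha - (b^2)^\alpha = (ac)^\alpha - (b^2)^\alpha \geq 0$ since $ac \geq b^2$ and $t \mapsto t^\alpha$ is nondecreasing. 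The inductive step is where the threshold $n-2$ must be spent: after reducing to strictly positive entries by a continuity argument, I would use an integral representation of $t \mapsto t^\alpha$ (writing the $\alpha$th power as an integral of lower powers) to express the size-$n$ problem at exponent $\alpha$ in terms of size-$(n-1)$ data (via a Schur complement) at an exponent that is at least $(n-1)-2 = n-3$, to which the induction hypothesis applies.

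For part (2), fix distinct reals $0 < v_1 < \cdots < v_n$, let $v = (v_1, \dots, v_n)^T$, and consider $A_\epsilon := \mathbf{1}\mathbf{1}^T + \epsilon\, vv^T$, which is p.s.d.\ with positive entries for small $\epsilon > 0$. Writing $v^{\circ k} := (v_1^k, \dots, v_n^k)^T$ and expanding the binomial series gives, entrywise, $(1 + \epsilon v_i v_j)^\alpha = \sum_{k \geq 0} \binom{\alpha}{k} \epsilon^k v_i^k v_j^k$, so that $A_\epsilon^{\circ\alpha} = \sum_{k \geq 0} \binom{\alpha}{k} \epsilon^k\, v^{\circ k}(v^{\circ k})^T$. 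For $\alpha \in (0,n-2)\setminus\N$ set $m := \lfloor \alpha \rfloor + 2$; then $2 \leq m \leq n-1$, and the product $\alpha(\alpha-1)\cdots(\alpha-m+1)$ has exactly one negative factor, whence $\binom{\alpha}{m} < 0$. Because $v^{\circ 0}, \dots, v^{\circ m}$ are linearly independent (a Vandermonde argument, using the $v_i$ distinct and $m \leq n-1$), I can choose $w \in \R^n$ orthogonal to $v^{\circ 0}, \dots, v^{\circ(m-1)}$ with $\langle w, v^{\circ m}\rangle \neq 0$. Then $w^T A_\epsilon^{\circ\alpha} w = \binom{\alpha}{m} \epsilon^m \langle w, v^{\circ m}\rangle^2 + O(\epsilon^{m+1})$, which is strictly negative for $\epsilon > 0$ small enough; hence $A_\epsilon^{\circ\alpha} \notin \bp_n(\R)$.

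The routine parts are the base case and the counterexample bookkeeping; the main obstacle is the inductive step of part (1). The delicate point is producing an integral identity that simultaneously lowers the matrix size by one, through a Schur complement, and lowers the exponent enough to land back inside the admissible range for the smaller size, and then verifying that the arithmetic $n-2 \mapsto n-3$ is precisely what keeps the induction hypothesis applicable — this is the mechanism by which the threshold $n-2$ is forced.
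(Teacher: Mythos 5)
Your proposal is correct, and it is essentially the original FitzGerald--Horn argument itself, which this paper does not reproduce but only cites (\cite{FitzHorn}): part (1) there runs exactly through your mechanism, via the identity $x^\alpha - y^\alpha = \alpha \int_0^1 \bigl(y + t(x-y)\bigr)^{\alpha-1}(x-y)\,dt$ applied entrywise with $y_{ij} = a_{in}a_{nj}/a_{nn}$, so that $A - (a_{in}a_{nj}/a_{nn})$ is positive semidefinite with zero last row and column (your ``Schur complement'' reduction) and the exponent drops from $\alpha \geq n-2$ to $\alpha - 1 \geq (n-1)-2$, while part (2) there is precisely your family $\mathbf{1}\mathbf{1}^T + \epsilon\, vv^T$ with the first negative coefficient $\binom{\alpha}{\lfloor \alpha\rfloor + 2}$ isolated by a vector orthogonal to the lower Vandermonde powers. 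The only piece your sketch leaves implicit is the integral identity above, and since you correctly pinned down both its required function (size down by one via the last row/column, exponent down by one) and the arithmetic $n-2 \mapsto n-3$ that keeps the induction alive, the plan is complete in all essentials.
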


We now refine the above problem by restricting the set of matrices to
those with a given sparsity structure. Given a finite undirected simple
graph $G = (V,E)$ with nodes $V = \{ 1,2,\dots,n\}$, and a subset $I
\subset \R$, define
\begin{equation}
\bp_G(I) := \{ A \in \bp_n(I) : a_{ij} = 0\ \forall (i,j) \not\in E,\
i\ne j \}.
\end{equation}

\noindent We denote $\bp_G(\R)$ by $\bp_G$. All graphs below are
assumed to be finite and simple.
\vspace*{-0.45cm}
\begin{figure}[H]
\centering
\begin{minipage}{0.4\linewidth}
\begin{center}
\includegraphics[width=3cm]{./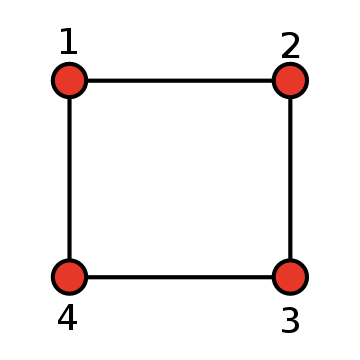}
\end{center}
\end{minipage}
\begin{minipage}{0.4\linewidth}
\begin{equation*}
\large
\begin{pmatrix}
* & * & 0 & * \\
* & * & * & 0 \\
0 & * & * & * \\
* & 0 & * & *
\end{pmatrix}
\end{equation*}
\end{minipage}
\caption{$\bp_G$ for $G$ a $4$-cycle.
Entries with an asterisk are not constrained.}
\end{figure}
\vspace*{-0.5cm}

The goal of this paper is to study the set of entrywise powers preserving
positivity on the set $\bp_G$. Such structured matrices arise naturally
in various subfields of mathematics, including combinatorial matrix
analysis \cite{Agler_et_al_88, Brualdi_mutually}, spectral graph theory
\cite{Fallat-Hogben}, and graphical models \cite{lauritzen}. As we
explain below, such matrices and their entrywise transforms are also of
importance in modern-day applications in high-dimensional
covariance estimation, making the problem at once classically motivated
as well as timely.

We now establish further notation. Note that
when $\alpha \not\in \N$, and $A$ is a real matrix,
$A^{\circ\alpha}$ is not always well-defined. 
Thus, we follow Hiai \cite{Hiai2009} and work with
the odd and even extensions to $\R$ of the power functions. Define
\begin{equation}
\psi_\alpha(x) := {\rm sgn}(x) |x|^\alpha, \qquad \phi_\alpha(x) :=
|x|^\alpha, \qquad \forall\ x \in \R \setminus \{ 0 \},
\end{equation}

\noindent and $\psi_\alpha(0) = \phi_\alpha(0)  := 0$. Given $f: \R \to
\R$, and $A = (a_{ij})$, define $f[A] := (f(a_{ij}))$. We now introduce
the main objects of study in this paper.  

\begin{definition}\label{D1}
Let $n \geq 2$ and let $G = (V,E)$ be a simple graph on $V =
\{1,\dots,n\}$. We define: 
\begin{align*}
\calh_G := &\ \{ \alpha \in \R : A^{\circ \alpha} \in \bp_G \text{ for
all } A \in \bp_G([0,\infty)) \},\\
\calh_G^\psi := &\ \{ \alpha \in \R : \psi_\alpha[A] \in \bp_G \text{ for
all } A \in \bp_G(\R) \}, \\
\calh_G^\phi := &\ \{ \alpha \in \R : \phi_\alpha[A] \in \bp_G \text{ for
all } A \in \bp_G(\R) \}.
\end{align*}
\end{definition}

\cref{Tcomp} thus shows: $\calh_{K_n} = \N \cup [n-2, \infty)$. The sets $\calh_{K_n}^\psi$
and $\calh_{K_n}^\phi$ have also been computed, and exhibit similar phase
transitions.

\begin{theorem}[FitzGerald--Horn \cite{FitzHorn}, Bhatia--Elsner
\cite{Bhatia-Elsner}, Hiai \cite{Hiai2009}, Guillot--Khare--Rajaratnam
\cite{GKR-crit-2sided}]\label{Tcomplete}
Let $n \geq 2$. The $\calh$-sets of powers preserving positivity for $G =
K_n$ are: 
\[
\calh_{K_n} = \ \N \cup [n-2,\infty), \qquad
\calh_{K_n}^\psi = \ (-1+2\N) \cup [n-2,\infty), \qquad
\calh_{K_n}^\phi = \ 2\N \cup [n-2, \infty).
\]
\end{theorem}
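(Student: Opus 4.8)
The plan is to establish all three identities by combining the completely-monotone-matrix power result of Theorem~\ref{Tcomp} (the case $G = K_n$) with two reductions: first, relating the two-sided sets $\calh_{K_n}^\psi$ and $\calh_{K_n}^\phi$ to the nonnegative set $\calh_{K_n}$; and second, analyzing the parity constraints forced by allowing matrices with entries of arbitrary sign. I will treat each set in turn.

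For $\calh_{K_n} = \N \cup [n-2,\infty)$, there is nothing to prove beyond invoking Theorem~\ref{Tcomp} directly, since the defining conditions of $\calh_{K_n}$ and of Theorem~\ref{Tcomp} coincide for the complete graph: $\bp_{K_n}([0,\infty)) = \bp_n([0,\infty))$, and positivity of $A^{\circ\alpha}$ is exactly what the theorem characterizes. The content lies in the other two sets. For $\calh_{K_n}^\phi$, observe that $\phi_\alpha(x) = |x|^\alpha$ is the even extension, so for any $A \in \bp_n(\R)$ we have $\phi_\alpha[A] = (|A|)^{\circ\alpha}$ where $|A| := (|a_{ij}|)$. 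The key point is that $|A|$ need not be positive semidefinite when $A$ is, so we cannot simply reduce to the nonnegative case. Instead I would argue both inclusions. For the forward direction (sufficiency), I expect $\alpha \in 2\N$ to work because $|x|^{2k} = x^{2k}$, making $\phi_{2k}[A] = A^{\circ 2k}$ a genuine Schur power of $A \in \bp_n(\R)$, hence positive semidefinite by the Schur product theorem; and $\alpha \geq n-2$ should follow from a limiting or diagonal-dominance argument reducing to the nonnegative FitzGerald--Horn bound. For necessity, I would show that any $\alpha \notin 2\N$ with $\alpha < n-2$ fails, by exhibiting a matrix $A \in \bp_n(\R)$ with some negative entries on which $\phi_\alpha[A]$ loses positivity; the natural candidate is a rank-one matrix $vv^T$ with a sign pattern chosen so that $|x|^\alpha$ with non-even-integer $\alpha$ destroys the positive structure.

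For $\calh_{K_n}^\psi = (-1 + 2\N) \cup [n-2,\infty)$, the analysis is parallel but with the odd extension $\psi_\alpha(x) = \operatorname{sgn}(x)|x|^\alpha$. Here the natural sufficient powers are the odd positive integers $\alpha = 2k-1 \in -1+2\N$, since $\psi_{2k-1}(x) = x^{2k-1}$ gives $\psi_{2k-1}[A] = A^{\circ(2k-1)}$, again positive semidefinite by the Schur product theorem; and $\alpha \geq n-2$ should again reduce to the nonnegative threshold. The necessity direction again requires constructing a sign-patterned counterexample for each $\alpha \in (0,n-2)$ that is not an odd integer (and ruling out $\alpha \leq 0$ separately). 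The unifying principle is that $\psi_\alpha$ and $\phi_\alpha$ agree with honest integer Hadamard powers precisely on the odd and even integers respectively, which explains the arithmetic-progression structure of the discrete part, while the continuous threshold $n-2$ is inherited unchanged from the nonnegative theory once one handles the sign issue.

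The main obstacle I anticipate is the necessity (counterexample) direction for the two-sided sets, specifically constructing, for each forbidden exponent $\alpha$, an explicit $A \in \bp_n(\R)$ with mixed signs whose transform $\psi_\alpha[A]$ or $\phi_\alpha[A]$ fails to be positive semidefinite. The nonnegative counterexamples of FitzGerald--Horn do not immediately transfer, because the sign extensions behave differently on negative entries; one must design the sign pattern so that the non-integer power genuinely breaks positivity, and verify this by producing a test vector $w$ with $w^T \psi_\alpha[A] w < 0$. A clean route is to perturb around a boundary matrix and compute the leading-order behavior of the smallest eigenvalue as a function of $\alpha$, isolating the parity obstruction; I would defer the full construction to the cited works (Bhatia--Elsner, Hiai, Guillot--Khare--Rajaratnam) for the sharp cases and present only the structural reduction here.
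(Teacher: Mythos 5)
This theorem is quoted in the paper as background, with the proof deferred entirely to the cited works of FitzGerald--Horn, Bhatia--Elsner, Hiai, and Guillot--Khare--Rajaratnam; so the fair standard for your write-up is whether the reductions you add on top of those citations are sound. Two of them are not. The most concrete error is your proposed counterexample device: a rank-one matrix $vv^T$ with a chosen sign pattern can \emph{never} witness failure of either extension, because both $\psi_\alpha$ and $\phi_\alpha$ are multiplicative, $\psi_\alpha(xy)=\psi_\alpha(x)\psi_\alpha(y)$ and $\phi_\alpha(xy)=\phi_\alpha(x)\phi_\alpha(y)$, whence $\psi_\alpha[vv^T]=\psi_\alpha[v]\,\psi_\alpha[v]^T$ and $\phi_\alpha[vv^T]=\phi_\alpha[v]\,\phi_\alpha[v]^T$ are positive semidefinite of rank one for every $\alpha$ and every sign pattern of $v$. (The paper itself exploits exactly this identity in \eqref{Esuper_rank1}.) Any counterexample must have rank at least two, and your sketch offers no mechanism for producing one.

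You have also inverted where the difficulty lies. For non-integer $\alpha\in(0,n-2)$ the nonnegative counterexamples of Theorem~\ref{Tcomp} transfer \emph{verbatim}, contrary to your claim: on $\bp_n([0,\infty))\subset\bp_n(\R)$ one has $\psi_\alpha[A]=\phi_\alpha[A]=A^{\circ\alpha}$, so part (2) of Theorem~\ref{Tcomp} immediately excludes all such $\alpha$ from both $\calh^\psi_{K_n}$ and $\calh^\phi_{K_n}$. What does \emph{not} follow from the nonnegative theory --- and what necessitated the citation of \cite{GKR-crit-2sided} --- are the wrong-parity integers below the threshold: even integers $<n-2$ must be excluded from $\calh^\psi_{K_n}$ and odd integers $<n-2$ from $\calh^\phi_{K_n}$ (together with $\alpha\le 0$), and these require genuinely signed constructions of rank $\ge 2$ that your plan never isolates. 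Finally, sufficiency of $\alpha\ge n-2$ for $\psi_\alpha,\phi_\alpha$ on all of $\bp_n(\R)$ admits no ``limiting or diagonal-dominance reduction'' to the nonnegative FitzGerald--Horn bound --- as you yourself observe, $(|a_{ij}|)$ need not be positive semidefinite when $A$ is --- and the cited proofs instead argue directly in the signed setting, by induction on $n$ combined with an integral representation of the divided difference of the power function. Deferring the sharp cases to the references is legitimate (the paper does the same), but then the only mathematical content your proposal adds beyond the citations is the two reductions above, and both are flawed.
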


\noindent (See \cite{GKR-crit-2sided} for more details.)
\cref{Tcomplete} demonstrates that there is a threshold value
above which every power function $x^\alpha, \psi_\alpha$, or
$\phi_\alpha$ preserves positivity on $\bp_n([0,\infty))$ or $\bp_n(\R)$,
when applied entrywise. The threshold is commonly referred to as the
\textit{critical exponent} for preserving positivity. We now extend this
notion to all graphs.

\begin{definition}
Given a graph $G$, define the \emph{(Hadamard) critical exponents of $G$}
to be
\begin{align*}
CE_H(G) := &\ \min \{\alpha \in \R : A \in \bp_G([0,\infty)) \Rightarrow
A^{\circ \beta} \in \bp_G \textrm{ for every } \beta \geq \alpha\},\\
CE_H^\psi(G) := &\ \min \{\alpha \in \R : A \in \bp_G(\R) \Rightarrow
\psi_\alpha[A] \in \bp_G \textrm{ for every } \beta \geq \alpha\},\\
CE_H^\phi(G) := &\ \min \{\alpha \in \R : A \in \bp_G(\R) \Rightarrow
\phi_\alpha[A] \in \bp_G \textrm{ for every } \beta \geq \alpha\}.
\end{align*}
\end{definition}

These critical exponents appear to be new graph invariants,
not previously studied in the literature.
Note that since every graph $G = (V,E)$ is contained in a
complete graph, the critical exponents of $G$ are well defined by \cref{Tcomplete}, and bounded above by $|V|- 2$. However,
computing critical exponents is a challenging problem at the intersection
of graph theory, analysis, and matrix theory. Indeed, the
critical exponents are not known for many families of non-complete
graphs, and provide interesting avenues of research in combinatorial
matrix analysis, which also have the potential to impact other
areas.\smallskip

We now state our main result. To do so, we first recall the
notion of \textit{chordal graphs}. These are precisely the graphs $G$ in
which every cycle of length $4$ or more has a chord. 
Chordal graphs are prominent in mathematics as well as applications. They
are also known as decomposable graphs, triangulated graphs, and rigid
circuit graphs; have a rich structure, and include several well-known
examples of graphs (see \cref{Table_chordal}).
Chordal graphs play a fundamental role in multiple areas including the
matrix completion problem
\cite{Bala-PD-completions,Grone-PD-completions,paulsen_et_al}, maximum
likelihood estimation in the theory of Markov random fields \cite[Section
5.3]{lauritzen}, and perfect Gaussian elimination \cite{golumbic}.

Let $K_n^{(1)}$ be the complete graph on $n$ vertices with one edge
missing. Then we have:

\begin{theorem}[Main result]\label{Tmain}
Let $G$ be any chordal graph with at least $2$ vertices and let $r$ be
the largest integer such that $K_r^{(1)}$ is 
a subgraph of $G$. Then
\begin{equation}
\calh_G = \N \cup [r-2,\infty), \quad
\calh_G^\psi = (-1+2\N) \cup [r-2, \infty), \quad
\calh_G^\phi = 2\N \cup [r-2, \infty). 
\end{equation}
In particular, $CE_H(G) = CE_H^\psi(G) = CE_H^\phi(G) = r-2$. 
\end{theorem}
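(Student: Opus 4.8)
The plan is to prove both inclusions for each of the three $\calh$-sets. The arguments for $\calh_G^\psi$ and $\calh_G^\phi$ differ from the one for $\calh_G$ only in the bookkeeping of admissible integers (odd, even, or all positive integers, exactly as in \cref{Tcomplete}), so I focus on $\calh_G$ and indicate the sign modifications at the end. The integers lie in $\calh_G$ by the Schur product theorem, so it remains to locate the threshold. Two elementary reductions organize the argument. First, if $H$ is an induced subgraph of $G$ then $\calh_G\subseteq\calh_H$: padding any $B\in\bp_H([0,\infty))$ by zeros produces $A\in\bp_G([0,\infty))$ whose entrywise power restricts to $B^{\circ\alpha}$ on $V(H)$, so positivity of $A^{\circ\alpha}$ forces positivity of $B^{\circ\alpha}$. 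Second, writing $\omega$ for the clique number, the hypothesis gives $\omega\le r\le\omega+1$, with $r=\omega+1$ exactly when two maximum cliques of $G$ meet in a common $(\omega-1)$-clique (their apexes being non-adjacent, as $K_{\omega+1}\not\subseteq G$). Thus the theorem hinges on two statements: the lower bound $\calh_{K_r^{(1)}}=\N\cup[r-2,\infty)$ (equivalently $CE_H(K_r^{(1)})=r-2$), and the upper bound that for chordal $G$ every $\alpha\ge r-2$ preserves positivity on $\bp_G$.

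For the lower bound, the induced-subgraph reduction applied to the $r$ vertices carrying a copy of $K_r^{(1)}$ gives $\calh_G\subseteq\calh_{K_r^{(1)}}$ when that induced subgraph is $K_r^{(1)}$, and $\calh_G\subseteq\calh_{K_r}=\N\cup[r-2,\infty)$ (by \cref{Tcomplete}) when it is $K_r$; either way $\calh_G\subseteq\N\cup[r-2,\infty)$. The content is therefore the key lemma $\calh_{K_r^{(1)}}\subseteq\N\cup[r-2,\infty)$: for each non-integer $\alpha\in(0,r-2)$ I must exhibit $A\in\bp_r([0,\infty))$ with $a_{12}=0$ and $A^{\circ\alpha}\not\succeq0$. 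Viewing $K_r^{(1)}$ as two copies of $K_{r-1}$ glued along the separator $S=\{3,\dots,r\}$ with non-adjacent apexes $1,2$, I take the \emph{doubled} matrix
\begin{equation*}
A=\begin{pmatrix} 1 & 0 & b^{T}\\ 0 & 1 & b^{T}\\ b & b & M\end{pmatrix},\qquad M\succ0,\ b\ge0,
\end{equation*}
for which $A\succeq0$ iff $\kappa:=b^{T}M^{-1}b\le\tfrac12$, while $A^{\circ\alpha}\not\succeq0$ iff $\kappa_\alpha:=(b^{\circ\alpha})^{T}(M^{\circ\alpha})^{-1}b^{\circ\alpha}>\tfrac12$. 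It then suffices to choose $(M,b)$ of Vandermonde type with $\kappa=\tfrac12$ and $\kappa_\alpha>\tfrac12$. Expanding $\kappa_\alpha$ about the degenerate configuration, the obstruction is carried by the first negative binomial coefficient $\binom{\alpha}{\lfloor\alpha\rfloor+2}$, and the apex-doubling supplies exactly the one extra power-direction that deleting the edge would otherwise cost, so that this coefficient becomes active for every non-integer $\alpha<r-2$ rather than only for $\alpha<r-3$. The case $r=3$ is already instructive: $A=\bigl(\begin{smallmatrix}1&0&s\\0&1&s\\s&s&1\end{smallmatrix}\bigr)$ with $s=1/\sqrt2$ is positive semidefinite, whereas $\det A^{\circ\alpha}=1-2^{\,1-\alpha}<0$ for $\alpha<1=r-2$. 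The delicate point is precisely this gain of the sharp exponent $r-2$ over the bare $K_{r-1}$ value $r-3$.

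The main obstacle is the upper bound: for chordal $G$ and $\alpha\ge r-2$, every $A\in\bp_G([0,\infty))$ has $A^{\circ\alpha}\in\bp_G$. I would argue by induction on the clique tree, peeling a simplicial vertex $v$ with $N[v]=\{v\}\cup S$ a maximal clique and $S$ its separator, so that $A$ has block form with a zero $(\{v\},R)$-corner for $R=V\setminus N[v]$. The clique block $A_{N[v]}^{\circ\alpha}$ is positive semidefinite by FitzGerald--Horn since $|N[v]|\le\omega\le r$ gives $\alpha\ge r-2\ge|N[v]|-2$, and $A^{\circ\alpha}_{S\cup R}\succeq0$ by induction. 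One is tempted to glue these two overlapping positive semidefinite blocks, but this is exactly where the difficulty lies: gluing across the separator with a \emph{forced} zero corner does not follow from the clique conditions alone. Indeed, already for the path $P_3$ the two edge-blocks of $A^{\circ\alpha}$ are automatically positive semidefinite for every $A\succeq0$, yet $A^{\circ\alpha}$ fails to be positive semidefinite for $\alpha<1$. The correct statement uses the global positivity of $A$ itself and amounts to a Loewner/Schur-complement inequality controlling the completion interval of the zero corner after powering; localizing this inequality to the extremal configuration of two cliques sharing a separator with single non-adjacent apexes reproduces exactly the scalar $K_r^{(1)}$ inequality (the generalization of the elementary estimate $x^{\alpha}+y^{\alpha}\le x+y\le1$, valid for $x,y\in[0,1]$ and $\alpha\ge1$), which holds precisely for $\alpha\ge r-2$. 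Establishing this gluing inequality in the requisite generality---i.e.\ that no obstruction beyond $K_r$ and $K_r^{(1)}$ survives in a chordal graph---is the technical heart of the proof.

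Combining the two bounds yields $\calh_G=\N\cup[r-2,\infty)$ and hence $CE_H(G)=r-2$. For $\calh_G^\psi$ and $\calh_G^\phi$ one repeats both directions over $\bp_G(\R)$, the only changes being that the odd (resp.\ even) positive integers are the integer members surviving the sign conventions of $\psi_\alpha$ (resp.\ $\phi_\alpha$)---as in \cref{Tcomplete}---and that the lower-bound counterexamples are realized with the corresponding sign pattern; the threshold $r-2$, and hence the equalities $CE_H^\psi(G)=CE_H^\phi(G)=r-2$, are unchanged.
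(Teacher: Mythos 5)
Your necessity half follows the paper's route: reduce via induced-subgraph padding to $\calh_G \subseteq \calh_{K_r^{(1)}}$ (or $\calh_{K_r}$), then show the $\calh$-sets of $K_r^{(1)}$ coincide with those of $K_r$; your doubled-apex matrix is exactly the paper's $W(\mathbf{u}, \mathbf{v}, M)$ from \eqref{Enk1} with $\mathbf{u} = \mathbf{v} = b$, and your sketch of why the threshold stays at $r-2$ rather than dropping to $r-3$ is at roughly the level of detail the extended abstract itself gives (it defers to \cite{GKR-critG}). The genuine gap is in the sufficiency half. You correctly diagnose, via your $P_3$ example, that positivity of the two powered clique blocks does not glue across a separator with a forced zero corner --- but you then leave the needed ``gluing inequality'' as an unproven assertion, and that assertion \emph{is} the theorem. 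The paper closes this hole with three concrete ingredients absent from your proposal: (i) \cref{Ldecomp_chord}, which writes any $M \in \bp_G$ as $M_1 + M_2$ with $M_1 \in \bp_{G_{A\cup C}}$, $M_2 \in \bp_{G_{B \cup C}}$ via a Schur complement; (ii) the notion of Loewner super-additivity, so that $f[M] - f[M_1] - f[M_2] \succeq 0$ reduces gluing to super-additivity of $f$ on the separator clique alone (\cref{Tchordal}(1)); and (iii) \cref{Tsuperadd}, which says $x^\alpha$ is Loewner super-additive on $\bp_n(\R)$ iff $\alpha \in \N \cup [n,\infty)$ --- note the threshold is $n$, not $n-2$ --- combined with the combinatorial lemma that in a perfect ordering of the maximal cliques every separator satisfies $|S_k| \leq r-2$ (otherwise two apexes $v \in C_i \setminus S_k$, $w \in R_k$ adjacent to all of $S_k$ would produce an induced $K_{r+1}^{(1)}$). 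It is this interplay between the super-additivity threshold $|S_k|$ and the separator-size bound $r-2$ that makes $\alpha \geq r-2$ suffice; none of it appears in your argument.

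Moreover, your specific induction scheme would fail even if handed the paper's machinery. You peel a single simplicial vertex $v$, so the separator is $N(v)$, which can have size $\omega - 1 = r-1$ (e.g.\ when $r = \omega$ and $N[v]$ is a maximum clique); super-additivity on $\bp_{r-1}$ requires $\alpha \geq r-1$, strictly more than the claimed threshold $r-2$. The paper instead inducts on a perfect ordering $\{C_1, \dots, C_k\}$ of the \emph{maximal cliques} (\cref{Tchordalcarac}), peeling the entire residual $R_k$ at once, precisely so that the relevant separator is $S_k = H_{k-1} \cap C_k$, whose size the $K_{r+1}^{(1)}$-argument caps at $r-2$. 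So the choice of decomposition is not bookkeeping: along your simplicial-vertex induction the ``gluing inequality'' you postulate is false at exponent $r-2$, and repairing this requires restructuring the induction as the paper does.
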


Our main result extends the previous work in \cref{Tcomplete} to
the important family of chordal graphs. Moreover, it shows how the
problem of finding powers preserving positivity is solvable using
combinatorial techniques.\smallskip

We conclude this section with some remarks. First, the cones $\bp_G$ of
structured matrices naturally arise in applications, as (inverse)
covariance/correlation matrices with an underlying graphical model \cite{lauritzen}.
Powering such matrices entrywise is a way to regularize them in
high-dimensional probability and statistics. This procedure often improves their
properties (e.g.~condition number) and helps separate signal from noise
-- see \cite{bickel_levina, Li_Horvath, Zhang_Horvath} for more details.
\cref{Tmain} and related results are relevant in this context.
For instance, we show in recent works
\cite{GKR-critG,Guillot_Khare_Rajaratnam2012} that the critical exponent
$CE_H(G)$ of any tree or bipartite graph $G$ is $1$. As a consequence,
unlike in the unconstrained case of $\bp_n$ (i.e., $K_n$), families of
sparse as well as dense graphs $G = (V,E)$ can have very small critical
exponents that do not grow with $V$. This is important as such small
powers can regularize matrices, yet minimally modify their entries.

This work is an extended abstract of \cite{GKR-critG}. Understanding which powers preserve
positivity is part of a broad program by the authors; 
see \cite{BGKP-fpsac, BGKP-fixeddim, BGKP-hankel, GKR-crit-2sided,
GKR-critG, Guillot_Khare_Rajaratnam2012, GKR-lowrank, Guillot_Rajaratnam2012, Guillot_Rajaratnam2012b} and the references therein. Our
work has yielded surprising connections to other areas such as Schur
polynomials and symmetric function theory; see \cite{BGKP-fpsac,
BGKP-fixeddim} for more details.

\section{Proof of the main result}\label{Sproofs}

In this section we provide the main ideas used to prove \cref{Tmain}. Complete proofs as well as other ramifications
of the results in this paper can be found in \cite{GKR-critG}.

We begin by recalling some properties of chordal graphs (see
e.g.~\cite[Chapter 5.5]{Diestel}, \cite[Chapter 4]{golumbic}). Given a
graph $G = (V,E)$, and $C \subset V$, denote by $G_C$ the subgraph of $G$
induced by $C$. A \textit{clique} in $G$ is a complete induced subgraph
of $G$. A subset $C \subset V$ \textit{separates} $A \subset V$ from $B
\subset V$ if every path from a vertex in $A$ to a vertex in $B$
intersects $C$. A partition $(A,C,B)$ of subsets of $V$ is a
\textit{decomposition} of $G$ if $G_C$ is a clique and $C$ separates $A$
from $B$ (see \cref{Fdecomp}). A graph $G$ is \textit{decomposable}
if either $G$ is complete, or if there exists a decomposition $(A,C,B)$
of $G$ such that $G_{A \cup C}$ and $G_{B \cup C}$ are decomposable.
\vspace*{-0.5cm}
\begin{figure}[H]
\centering
\includegraphics[width=5.35cm]{./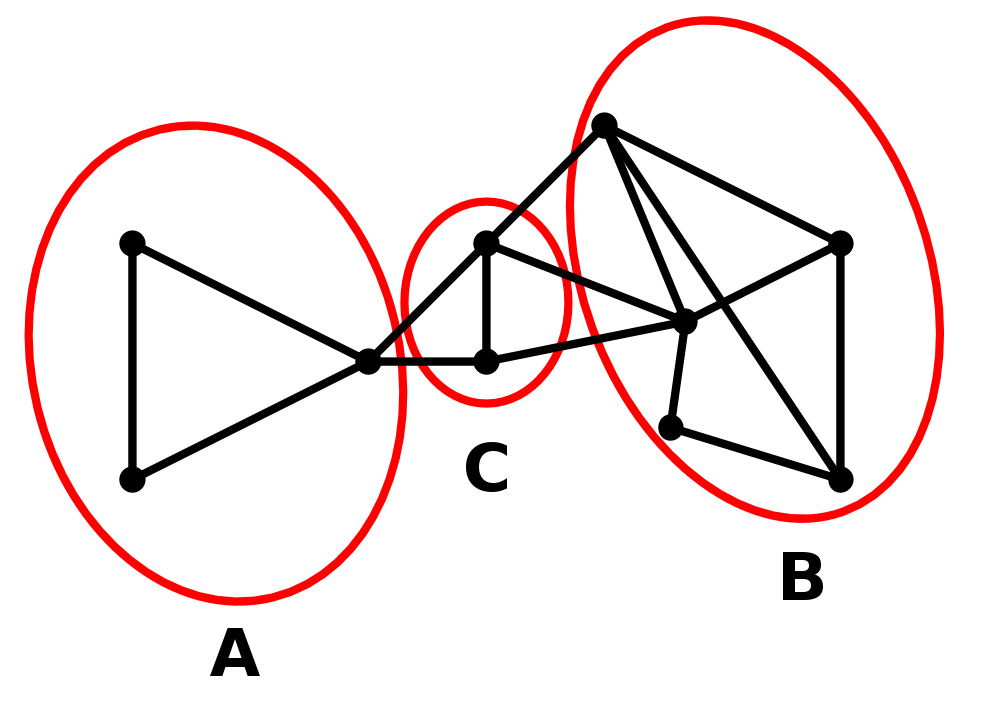}
\caption{Decomposition of a graph.}
\label{Fdecomp}
\end{figure}

Let $G$ be a graph and let $B_1, \dots, B_k$ be a sequence of subsets of
vertices of $G$. Define: 
\begin{equation}\label{Ehistories}
H_j := B_1 \cup \dots \cup B_j, \qquad R_j = B_j \setminus H_{j-1},
\qquad S_j = H_{j-1} \cap B_j, \qquad 1 \leq j \leq k, 
\end{equation}
and $H_0 := \emptyset$. The sets $H_j, R_j$, and $S_j$ are respectively
called the
\textit{histories, residuals, and separators} of the sequence. The
sequence $B_1, \dots, B_k$ is said to be a \textit{perfect ordering} if: 
\begin{enumerate}
\item For all $1 < i \leq k$, there exists $1 \leq j < i$ such that $S_i
\subset B_j$; and
\item The sets $S_i$ induce complete graphs for all $1 \leq i \leq k$. 
\end{enumerate}

Decompositions and perfect orderings provide important characterizations
of chordal graphs, as summarized in \cref{Tchordalcarac}. 

\begin{theorem}[{\cite[Chapter
2]{lauritzen}}]\label{Tchordalcarac} Given a graph $G =
(V,E)$, the following are equivalent: 
\begin{enumerate}
\item $G$ is chordal (i.e., each cycle with $4$ vertices or more in $G$
has a chord).
\item $G$ is decomposable. 
\item The maximal cliques of $G$ admit a perfect ordering. 
\end{enumerate}
\end{theorem}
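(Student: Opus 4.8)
The plan is to prove the three conditions equivalent through the cycle $(1) \Rightarrow (2) \Rightarrow (3) \Rightarrow (1)$, inducting on $|V|$ in each implication. The structural engine is a preliminary lemma that I would establish first: \emph{every minimal vertex separator of a chordal graph induces a clique}. To see this, let $C$ be a minimal separator of two non-adjacent vertices, and let $x, y \in C$ be distinct. Minimality of $C$ forces each of $x$ and $y$ to have a neighbor in both of the two components of $G \setminus C$ that $C$ separates; splicing shortest paths from $x$ to $y$ through these two components yields a cycle through $x$ and $y$ of length at least $4$. Chordality supplies a chord, and since the two components are mutually separated by $C$, the only admissible chord is the edge $xy$. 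Hence any two vertices of $C$ are adjacent, so $G_C$ is a clique.

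For $(1) \Rightarrow (2)$: if $G$ is complete it is decomposable by definition. Otherwise choose non-adjacent $a, b$, let $C$ be a minimal separator of $a$ and $b$ (a clique, by the lemma), let $A$ be the vertex set of the component of $G \setminus C$ containing $a$, and set $B = V \setminus (A \cup C)$. Then $(A, C, B)$ is a decomposition, and $G_{A \cup C}$ and $G_{B \cup C}$ are induced subgraphs of a chordal graph---hence themselves chordal---on strictly fewer vertices, so they are decomposable by the inductive hypothesis.

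For $(2) \Rightarrow (3)$: induct using a decomposition $(A, C, B)$. Because $C$ is a clique separating $A$ from $B$, no clique of $G$ can meet both $A$ and $B$, so every maximal clique of $G$ is a maximal clique of $G_{A \cup C}$ or of $G_{B \cup C}$. By induction each piece admits a perfect ordering of its maximal cliques; I would concatenate the ordering for $G_{A \cup C}$ with that for $G_{B \cup C}$, prune any cliques that are non-maximal in $G$, and verify the two defining axioms of a perfect ordering for the resulting sequence. Separators internal to each block are handled by the inductive hypothesis, while each genuinely new separator---arising when a clique drawn from the $G_{B \cup C}$ block first meets the accumulated history---reduces to a subset of $C$, which already sits inside a maximal clique appearing in the $G_{A \cup C}$ block.

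For $(3) \Rightarrow (1)$: a perfect ordering $B_1, \dots, B_k$ of the maximal cliques induces a \emph{perfect elimination ordering} of $V$, obtained by eliminating the vertices of the residuals $R_k, R_{k-1}, \dots, R_1$ in turn; the two perfect-ordering axioms guarantee that when a vertex of $R_j$ is removed it is simplicial in the graph induced on the surviving history $H_j$. A graph admitting a perfect elimination ordering is chordal, since in any purported chordless cycle of length at least $4$ the earliest-eliminated vertex would have two later neighbors on the cycle, which simpliciality forces to be adjacent---impossible in a chordless cycle. Thus $G$ is chordal, closing the loop. The step I expect to demand the most care is the gluing in $(2) \Rightarrow (3)$: checking that the concatenation respects both perfect-ordering axioms requires careful tracking of how the shared clique $C$ appears among the histories and separators of the merged sequence, whereas the minimal-separator lemma, though the conceptual heart of $(1) \Rightarrow (2)$, has a short self-contained proof.
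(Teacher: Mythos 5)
First, a point of reference: the paper does not prove this theorem at all --- it quotes it from \cite[Chapter 2]{lauritzen} --- so your attempt must be measured against the standard argument. Two of your three implications are correct and standard: the minimal-separator lemma and the derivation of $(1) \Rightarrow (2)$ from it are Dirac's classical argument, and your $(3) \Rightarrow (1)$ via a perfect elimination ordering of the residuals is sound (note, incidentally, that the completeness axiom for separators is automatic here, since $S_i \subset B_i$ and $B_i$ is a clique; only the containment axiom has content).

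The genuine gap is in $(2) \Rightarrow (3)$: the concatenate-and-prune construction fails, and your key claim --- that each ``genuinely new'' separator reduces to a subset of $C$ --- is false for every second-block clique after the first. For such a clique $L_i$, the separator in the merged sequence is $(L_i \cap C) \cup S_i'$, where $S_i'$ is the separator internal to the $G_{B \cup C}$ ordering; your inductive hypothesis controls only $S_i'$, and no single earlier clique need contain the union. Concretely, take $G$ the path $a$--$c$--$b_1$--$b_2$ with $A = \{a\}$, $C = \{c\}$, $B = \{b_1, b_2\}$. The ordering $\{b_1,b_2\}, \{c,b_1\}$ is a perfect ordering of the maximal cliques of $G_{B \cup C}$ (separator $\{b_1\} \subset \{b_1,b_2\}$), all three cliques are maximal in $G$ so nothing is pruned, yet in the concatenation $\{a,c\}, \{b_1,b_2\}, \{c,b_1\}$ the last separator is $\{c,b_1\}$, contained in no earlier clique. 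The standard repair requires a strengthened induction hypothesis you do not have: for any maximal clique $K$ of a decomposable graph there is a perfect ordering of its maximal cliques \emph{beginning with} $K$ (re-rooting a clique tree, or Lauritzen's reordering lemma); one then starts the second block at a clique containing $C$, so that $C$ lies in the history from that point on and $L_i \cap C \subset S_i'$ for $i \geq 2$. Even this is not quite enough: when $C$ is a maximal clique of $G_{A \cup C}$ but not of $G$ (e.g., the path $a_1$--$c_1$--$c_2$--$a_2$ with an extra vertex $b$ joined to $c_1, c_2$, and $A = \{a_1,a_2\}$, $C = \{c_1,c_2\}$, $B = \{b\}$), the junction separator $C$ lies in no surviving $A$-side clique, and in fact \emph{no} ordering putting the $A$-block entirely first is perfect --- so the order of the two blocks must also be chosen appropriately. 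If you prefer to avoid this bookkeeping altogether, a cleaner route is to prove $(2) \Rightarrow (1)$ directly (a chordless cycle of length at least $4$ cannot cross a decomposition $(A,C,B)$, so induction applies) and then obtain $(1) \Rightarrow (3)$ via Dirac's simplicial-vertex theorem, ordering the cliques by a simplicial elimination scheme.
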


We now provide a correspondence between decompositions of a
graph $G$, and decompositions of matrices in the associated cone $\bp_G$.
In the statement of the result and below, given a graph $G$ and an
induced subgraph $G'$, we identify $\bp_{G'}(I)$ with a subset of
$\bp_G(I)$ when convenient, via the assignment $M \ \mapsto \ M \ \oplus
\ {\bf 0}_{(V(G) \setminus V(G')) \times (V(G) \setminus V(G'))}$.

\begin{lemma}\label{Ldecomp_chord}
Let $G = (V,E)$ be a graph with a decomposition $(A,C,B)$ of $V$, and let
$M$ be a symmetric matrix. Assume the principal submatrices $M_{AA}$ and
$M_{BB}$ of $M$ are invertible. Then the following are equivalent: 
\begin{enumerate}
\item $M \in \bp_G$.
\item $M = M_1 + M_2$ for some matrices $M_1 \in \bp_{G_{A \cup C}}$ and
$M_2 \in \bp_{G_{B \cup C}}$. 
\end{enumerate} 
\end{lemma}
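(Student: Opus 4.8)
The plan is to treat the two implications separately, the reverse one being immediate and the forward one requiring an explicit Schur-complement construction that exploits the two invertibility hypotheses.

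First I would dispose of $(2) \Rightarrow (1)$. If $M = M_1 + M_2$ with $M_1 \in \bp_{G_{A \cup C}}$ and $M_2 \in \bp_{G_{B \cup C}}$, then both summands are positive semidefinite (as elements of $\bp_G$ under the embedding fixed above), so $M \succeq 0$. For the zero pattern, observe that $M_1$ vanishes in every entry indexed by $B$ and $M_2$ in every entry indexed by $A$; since $C$ separates $A$ from $B$, the graph $G$ has no edges between $A$ and $B$, so the only additional constraint is that the $A \times B$ block of $M$ vanish, which holds because neither summand contributes there. All remaining constraints are inherited from $G_{A\cup C}$ and $G_{B\cup C}$, so $M \in \bp_G$.

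For the substantive implication $(1) \Rightarrow (2)$, I would write $M$ in block form along the partition $(A,C,B)$. Separation forces the $A\times B$ block to vanish, giving
\[
M = \begin{pmatrix} M_{AA} & M_{AC} & 0 \\ M_{CA} & M_{CC} & M_{CB} \\ 0 & M_{BC} & M_{BB} \end{pmatrix},
\]
and since $M \succeq 0$ with $M_{AA}, M_{BB}$ invertible, both are in fact positive definite. The idea is to split the clique block as $M_{CC} = X + (M_{CC}-X)$ and set
\[
M_1 = \begin{pmatrix} M_{AA} & M_{AC} & 0 \\ M_{CA} & X & 0 \\ 0 & 0 & 0 \end{pmatrix}, \qquad M_2 = \begin{pmatrix} 0 & 0 & 0 \\ 0 & M_{CC}-X & M_{CB} \\ 0 & M_{BC} & M_{BB} \end{pmatrix},
\]
where $X$ ranges over symmetric matrices supported on $C$; the freedom to choose $X$ is available precisely because $G_C$ is complete and imposes no zero constraints on the $C\times C$ block, so $M_1$ and $M_2$ automatically carry the patterns of $G_{A\cup C}$ and $G_{B\cup C}$. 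Applying the Schur-complement criterion to each summand (using $M_{AA}, M_{BB}\succ 0$), the requirement $M_1, M_2 \succeq 0$ is equivalent to
\[
M_{CA} M_{AA}^{-1} M_{AC} \preceq X \preceq M_{CC} - M_{CB} M_{BB}^{-1} M_{BC}.
\]

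The main obstacle is to show that this Loewner interval is nonempty. I expect this to follow from a nested Schur complement: taking the Schur complement of $M$ with respect to $M_{AA}$ produces a positive semidefinite matrix indexed by $C \cup B$, and a further Schur complement with respect to $M_{BB}$ yields exactly
\[
M_{CC} - M_{CA} M_{AA}^{-1} M_{AC} - M_{CB} M_{BB}^{-1} M_{BC} \succeq 0,
\]
which is precisely the assertion that the upper endpoint dominates the lower one in the Loewner order. Choosing $X$ to be the lower endpoint (or any interior point) then yields $M_1 \in \bp_{G_{A\cup C}}$ and $M_2 \in \bp_{G_{B\cup C}}$ with $M = M_1 + M_2$, completing the construction. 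The two invertibility hypotheses enter exactly here: they are what make both Schur complements well defined and guarantee that the interval is nonempty.
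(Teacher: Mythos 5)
Your proof is correct and follows essentially the same route as the paper: the paper's explicit splitting takes $M_1$ with $C\times C$ block $M_{AC}^T M_{AA}^{-1} M_{AC}$, which is precisely your choice of $X$ at the lower endpoint of the Loewner interval, and the nonemptiness condition you verify via nested Schur complements is exactly the positive semidefiniteness of the Schur complement block $S = M_{CC} - M_{AC}^T M_{AA}^{-1} M_{AC} - M_{CB} M_{BB}^{-1} M_{CB}^T$ that the paper invokes (and which reappears in its factorization \eqref{Emiracle_decomposition}). Your parametrization by all admissible $X$ is a mild generalization of the paper's single explicit choice, but the underlying argument is the same.
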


The proof of \cref{Ldecomp_chord} requires working with Schur
complements. Recall that a symmetric block matrix 
$M = \begin{pmatrix}
A & B \\
B^T & C
\end{pmatrix}$
with $C$ invertible is positive definite if and only if $C$ is positive
definite, and the Schur complement of $C$ in $M$ 
\[
M/C := A - BC^{-1}B^T
\]
is positive definite. Similarly, if $A$ is invertible, then $M$ is
positive definite if and only if $A$ is positive definite and the Schur
complement $M/A := C-B^TA^{-1}B$ is positive definite.

\begin{proof}[Proof of \cref{Ldecomp_chord}.]
Clearly $(2) \implies (1)$. Conversely, write $M \in
\bp_G$ in block form as
\begin{equation*}
M = \begin{pmatrix}
M_{AA} & M_{AC} & 0 \\
M_{AC}^T & M_{CC} & M_{CB} \\
0 & M_{CB}^T & M_{BB}
\end{pmatrix}.
\end{equation*}

\noindent Then $M = M_1 + M_2$, with
\begin{equation*}
M_1 := \begin{pmatrix}
M_{AA} & M_{AC} & 0 \\
M_{AC}^T & M_{AC}^T M_{AA}^{-1} M_{AC} & 0 \\
0 & 0 & 0
\end{pmatrix}, \qquad M_2 := \begin{pmatrix}
0 & 0 & 0 \\
0 & M_{CC} - M_{AC}^T M_{AA}^{-1} M_{AC} & M_{CB} \\
0 & M_{CB}^T & M_{BB}
\end{pmatrix}.
\end{equation*}

\noindent Using properties of Schur complements, we easily verify that
$M_1 \in \bp_{G_{A \cup C}}, M_2 \in \bp_{G_{B \cup C}}$.
\end{proof}

Note that \cref{Ldecomp_chord} also provides information about the
extreme points of the convex  cone $\bp_G$ when $G$ is decomposable. As
we show below, the problem of understanding the geometry of $\bp_G$ is
closely related to the computation of the $\calh$ sets in \cref{D1}.  

\begin{remark}
When $G$ has a decomposition $(A,C,B)$ and $M \in \bp_G$, then $M$ also
factors as 
\begin{equation}\label{Emiracle_decomposition}
M = 
\begin{pmatrix}
M_{AA} & 0 & 0 \\
M_{AC}^T & \Id_{|C|} & M_{CB} \\
0 & 0 & M_{BB}
\end{pmatrix}
\begin{pmatrix}
M_{AA}^{-1} & 0 & 0 \\
0 & S & 0\\
0 & 0 & M_{BB}^{-1}
\end{pmatrix}
\begin{pmatrix}
M_{AA} & 0 & 0 \\
M_{AC}^T & \Id_{|C|} & M_{CB} \\
0 & 0 & M_{BB}
\end{pmatrix}^T, 
\end{equation}
where $\Id_k$ denotes the $k \times k$ identity matrix, and
$S := M_{CC} - M_{AC}^T M_{AA}^{-1} M_{AC} - M_{CB} M_{BB}^{-1}
M_{CB}^T$. We will make use of this factorization later.
\end{remark}

\cref{Ldecomp_chord} provides a powerful technique to verify when
functions preserve positivity on $\bp_G$ when applied
entrywise. Indeed, suppose $G$ is a graph with a decomposition $(A,C,B)$.
Let $M \in \bp_G$. Write $M = M_1 + M_2$ as in \cref{Ldecomp_chord},
with $M_1 \in \bp_{G_{A \cup C}}$ and $M_2 \in \bp_{G_{B \cup C}}$. 
Given a real function $f: \R \to \R$, recall that we denote by $f[M]$ the
matrix $(f(m_{ij}))$. Now if $f[M] - f[M_1] - f[M_2]$ is positive
semidefinite, then $f[M] \in \bp_G$ if $f$ preserves positivity on $\bp_{G_{A \cup C}}$
and $\bp_{G_{B \cup C}}$. Thus, we introduce the following notion.

\begin{definition}
Given a graph $G$ and a
function $f : \R \to \R$ with $f(0) = 0$, we say that $f[-]$ is
\emph{Loewner super-additive on $\bp_G(\R)$} if $f[A+B] - f[A] - f[B]
\in \bp_G(\R)$ for $A,B \in \bp_G(\R)$.
\end{definition} 

Note that this notion coincides with the usual notion of super-additivity
on $[0,\infty)$ when $G$ has only one vertex.

The above discussion shows that a function preserves
positivity on $\bp_G$ if it preserves positivity on $\bp_{G_{A \cup C}}$
and $\bp_{G_{B \cup C}}$ and is Loewner super-additive on $\bp_{G_C}$.
\cref{Tchordal} below shows that the converse also holds under
certain assumptions.

\begin{theorem}\label{Tchordal}
Let $G = (V,E)$ be a graph with a decomposition $(A,C,B)$, and let $f:
\R \to \R$.
\begin{enumerate}
\item If $f[-]$ preserves positivity on $\bp_{G_{A \cup C}}$ and on
$\bp_{G_{B \cup C}}$ and is Loewner super-additive on $\bp_{G_C}$ then
$f[-]$ preserves positivity on $\bp_G$. 
\item Conversely, if $f = \psi_\alpha$ or $f= \phi_\alpha$ and $f[-]$
preserves positivity on $\bp_G$, then $f[-]$ is Loewner super-additive on
$\bp_{G_{C'}}$ for every clique $C' \subset C$ for which there exist
vertices $v_1 \in A, v_2 \in B$ that are adjacent to every $v \in C'$
(see \cref{FTchordal}).
\end{enumerate}
\end{theorem}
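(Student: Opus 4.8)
The plan is to prove the two implications separately: part (1) is a direct consequence of the additive decomposition in \cref{Ldecomp_chord} together with Loewner super-additivity on the separator, while part (2) requires an explicit matrix construction that exploits the two extra vertices $v_1, v_2$. For part (1), let $M \in \bp_G$. First I would reduce to the case where $M_{AA}$ and $M_{BB}$ are invertible: either replace $M$ by $M + \epsilon \Id$ (which stays in $\bp_G$ and has invertible diagonal blocks) and pass to the limit using that $\bp_G$ is closed and the relevant $f$ is continuous, or, to avoid any continuity hypothesis, run the decomposition of \cref{Ldecomp_chord} with the Moore--Penrose pseudo-inverse $M_{AA}^+$ in place of $M_{AA}^{-1}$, which is legitimate because $M \succeq 0$ forces $\mathrm{range}(M_{AC}) \subseteq \mathrm{range}(M_{AA})$. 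Either way I obtain $M = M_1 + M_2$ with $M_1 \in \bp_{G_{A\cup C}}$, $M_2 \in \bp_{G_{B\cup C}}$, and $M_1, M_2$ agreeing with $M$ off the $C$-block. Writing everything in $(A,C,B)$ block form and using $f(0)=0$, a direct computation shows that every block of $f[M] - f[M_1] - f[M_2]$ vanishes except the $C$--$C$ block, which equals $f[(M_1)_{CC} + (M_2)_{CC}] - f[(M_1)_{CC}] - f[(M_2)_{CC}]$ since $(M_1)_{CC} + (M_2)_{CC} = M_{CC}$. As $(M_1)_{CC}, (M_2)_{CC} \in \bp_{G_C}$ and $f$ is Loewner super-additive on $\bp_{G_C}$, this block is positive semidefinite, so $f[M] - f[M_1] - f[M_2] \succeq 0$; combined with $f[M_1], f[M_2] \succeq 0$ (the two preservation hypotheses) and the fact that $f(0)=0$ keeps the correct zero pattern, we conclude $f[M] \in \bp_G$.

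For part (2), fix a clique $C' \subseteq C$ and vertices $v_1 \in A$, $v_2 \in B$ adjacent to every vertex of $C'$. The first step is a purely combinatorial observation: since $C$ separates $A$ from $B$, the vertices $v_1$ and $v_2$ are non-adjacent, so the subgraph induced on $\{v_1\} \cup C' \cup \{v_2\}$ is exactly $K_{|C'|+2}^{(1)}$, with the single missing edge joining $v_1$ and $v_2$ (this is also what ties the statement to the invariant $r$ in \cref{Tmain}). Matrices supported on this induced subgraph and zero elsewhere lie in $\bp_G$, so I may test the hypothesis on them. Given $P, Q \in \bp_{G_{C'}}$, I would form
\[
N = \begin{pmatrix} a & (\sqrt a\, x)^T & 0 \\ \sqrt a\, x & P + Q & \sqrt b\, y \\ 0 & (\sqrt b\, y)^T & b \end{pmatrix} \in \bp_G, \qquad a, b > 0,
\]
where in the extreme case $P = xx^T$, $Q = yy^T$ are rank one; then $N = \xi\xi^T + \eta\eta^T \succeq 0$ for $\xi = (\sqrt a, x^T, 0)^T$ and $\eta = (0, y^T, \sqrt b)^T$, and the only structurally forced zero (the $v_1$--$v_2$ entry) is respected.

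Applying the hypothesis gives $f[N] \succeq 0$. Because $f(0)=0$, the $v_1$--$v_2$ entry of $f[N]$ is still zero, so the $2 \times 2$ corner block on $\{v_1, v_2\}$ is the diagonal matrix $\mathrm{diag}(f(a), f(b))$, which is invertible since $f(a), f(b) \neq 0$. Taking the Schur complement of this corner block yields
\[
f[P+Q] - \tfrac{1}{f(a)} f[\sqrt a\, x]\, f[\sqrt a\, x]^T - \tfrac{1}{f(b)} f[\sqrt b\, y]\, f[\sqrt b\, y]^T \succeq 0.
\]
Here I would invoke the multiplicativity of the power functions: for $f = \phi_\alpha$ or $\psi_\alpha$ one has $f[\sqrt a\, x] = a^{\alpha/2} f[x]$, $f(a) = a^\alpha$, and $f[xx^T] = f[x]\,f[x]^T$, so the first subtracted term collapses to $f[x]f[x]^T = f[xx^T] = f[P]$, and likewise the second equals $f[Q]$. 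This produces the desired super-additivity $f[P+Q] - f[P] - f[Q] \succeq 0$ for all \emph{rank-one} $P, Q$.

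The main obstacle is upgrading this to arbitrary $P, Q \in \bp_{G_{C'}}$: the two auxiliary vertices can inject only the rank-one blocks $\sqrt a\, x$ and $\sqrt b\, y$, so the construction above certifies super-additivity only on the extreme rays of the cone. I expect this extension, rather than the construction itself, to be the technical heart of the argument. I would handle it by first reducing, via the induced-subgraph remark, to the single graph $K_{|C'|+2}^{(1)}$, and then either running an induction on the ranks of $P$ and $Q$ that peels off one rank-one summand at a time while feeding the remaining higher-rank matrix back into the middle block, or invoking the known characterization of when $\psi_\alpha, \phi_\alpha$ are Loewner super-additive on $\bp_{|C'|}(\R)$ (the complete-graph case, governed by \cref{Tcomplete}), for which the family of rank-one inequalities obtained above is precisely the certificate that pins down the threshold $\alpha \geq |C'|$.
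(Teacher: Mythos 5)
Your proposal is correct and follows essentially the same route as the paper: part (1) via the additive splitting of \cref{Ldecomp_chord} with only the $C$--$C$ block of $f[M]-f[M_1]-f[M_2]$ surviving, and part (2) via a test matrix supported on $\{v_1\} \cup C' \cup \{v_2\} \cong K_{|C'|+2}^{(1)}$ (the paper's $W(\mathbf{u},\mathbf{v},\mathbf{u}\mathbf{u}^T + \mathbf{v}\mathbf{v}^T)$, which is your construction with $a=b=1$), extracting rank-one Loewner super-additivity by a Schur-complement computation. The ``technical heart'' you flag at the end is precisely the last sentence of \cref{Tsuperadd} (for the power functions, super-additivity on rank-one matrices in $\bp_{|C'|}(\R)$ already implies it on all of $\bp_{|C'|}(\R)$) --- that, rather than \cref{Tcomplete}, is the result to invoke, and it makes your alternative rank-peeling induction (which would not telescope, since the rank-one inequalities do not directly control pairs of higher rank) unnecessary.
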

\vspace*{-0.25cm}
\begin{figure}[H]
\centering
\includegraphics[width=5.75cm]{./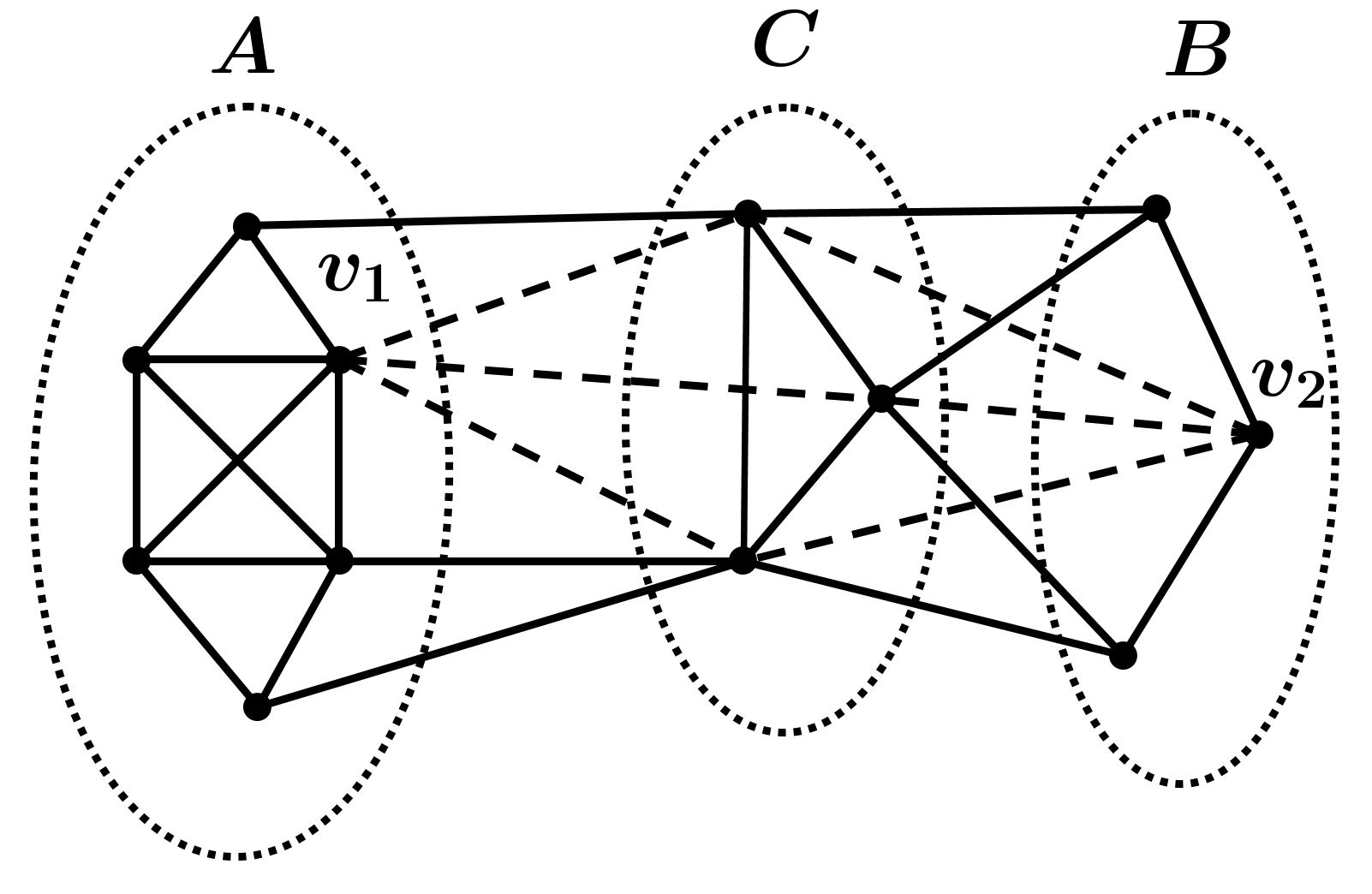}
\caption{Illustration of the second part of \cref{Tchordal}.}
\label{FTchordal}
\end{figure}

To prove \cref{Tchordal}, we recall
previous work on Loewner super-additive functions.

\begin{theorem}[{Guillot, Khare, and Rajaratnam \cite[Theorem
5.1]{GKR-crit-2sided}}]\label{Tsuperadd}
Given an integer $n \geq 2$, the sets of entrywise powers
$\alpha \in \R$, such that the functions $f_\alpha(x) = x^\alpha,
\psi_\alpha(x), \phi_\alpha(x)$ are Loewner super-additive maps on
$\bp_n(\R)$ are, respectively,
\[
\N \cup [n, \infty), \qquad
(-1+2\N) \cup [n, \infty), \qquad
2\N \cup [n, \infty).
\]
Moreover, the same results hold if $\bp_n(\R)$ is replaced by the set of
rank one matrices in $\bp_n(\R)$.
\end{theorem}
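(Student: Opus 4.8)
The plan is to prove the two inclusions separately: first that every $\alpha$ in the claimed set induces a Loewner super-additive map (sufficiency), and then that every $\alpha$ outside it does not (necessity). A convenient bonus of the approach below is that sufficiency will be established on all of $\bp_n(\R)$, while the counterexamples in the necessity step will themselves be rank one; this yields the ``moreover'' clause for free, since restricting to rank-one matrices can only shrink the super-additive set, yet our counterexamples already live there.

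For sufficiency I would isolate two mechanisms. For integer exponents, the binomial theorem gives $(A+B)^{\circ k} - A^{\circ k} - B^{\circ k} = \sum_{j=1}^{k-1}\binom{k}{j}\, A^{\circ j}\circ B^{\circ(k-j)}$, and each summand is a Hadamard product of positive semidefinite matrices, hence positive semidefinite by the Schur product theorem; this disposes of $\alpha\in\N$ for $x^\alpha$, and---after noting $\psi_{2j-1}(x)=x^{2j-1}$ and $\phi_{2j}(x)=x^{2j}$---of the odd integers for $\psi_\alpha$ and the even integers for $\phi_\alpha$. For the continuous ranges I would use a two-fold integration. Writing $g(t):=f_\alpha[A+tB]-f_\alpha[tB]$, one has $g'(t)=\big(f_\alpha'[A+tB]-f_\alpha'[tB]\big)\circ B$, so by the Schur product theorem it suffices to show $f_\alpha'[X+Y]-f_\alpha'[Y]\succeq 0$; differentiating once more, $\frac{d}{ds}f_\alpha'[Y+sX]=f_\alpha''[Y+sX]\circ X$, so it suffices that $f_\alpha''[Z]\succeq 0$ for all $Z\in\bp_n$. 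Since $f_\alpha''=\alpha(\alpha-1)f_{\alpha-2}$ (and two differentiations return $\psi$ to $\psi$ and $\phi$ to $\phi$), \cref{Tcomplete} shows that $f_{\alpha-2}$ preserves positivity exactly when $\alpha-2\ge n-2$, i.e. $\alpha\ge n$; as $\alpha(\alpha-1)>0$ in this range, super-additivity follows after integrating twice.

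For necessity I would produce explicit rank-one counterexamples by perturbing the all-ones matrix. Fix a vector $x$ with distinct nonzero entries, set $A:=\mathbf{1}\mathbf{1}^T$ and $B:=\epsilon^2 xx^T$, both rank one, and write $M(\epsilon):=f_\alpha[A+B]-f_\alpha[A]-f_\alpha[B]$. For small $\epsilon>0$ the entries of $A+B$ are positive, and expanding gives $M(\epsilon)=\sum_{k\ge 1}\binom{\alpha}{k}\epsilon^{2k}\,x^{\circ k}(x^{\circ k})^T-\epsilon^{2\alpha}\,f_\alpha[x]f_\alpha[x]^T$, where $x^{\circ k}:=(x_i^k)_i$ and the constant ($k=0$) term cancels precisely because of super-additivity. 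The power vectors $x^{\circ 0},x^{\circ 1},\dots,x^{\circ(n-1)}$ form a Vandermonde basis of $\R^n$, so for $\alpha\in(m,m+1)$ with $0\le m\le n-1$ one can pick a test vector $w\perp x^{\circ 1},\dots,x^{\circ m}$ with $w^T f_\alpha[x]\ne 0$; then $w^T M(\epsilon) w=-\epsilon^{2\alpha}(w^T f_\alpha[x])^2+O(\epsilon^{2(m+1)})$ is strictly negative for small $\epsilon$, so $M(\epsilon)$ is not positive semidefinite. The remaining ``wrong-parity'' integers---even $\alpha$ for $\psi_\alpha$, odd $\alpha$ for $\phi_\alpha$---are handled by the same expansion, which now terminates; choosing $x$ with mixed signs and $w\perp x^{\circ 1},\dots,x^{\circ(\alpha-1)}$ isolates the order-$\epsilon^{2\alpha}$ coefficient $(w^T x^{\circ\alpha})^2-(w^T f_\alpha[x])^2$, and one exploits that $f_\alpha[x]\ne x^{\circ\alpha}$ for signed $x$ to make this negative.

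The main obstacle is the necessity step, and within it the non-degeneracy checks: one must guarantee that the isolated inner products (such as $w^T f_\alpha[x]$) can be made nonzero, and in the wrong-parity cases that the signs of the entries of $x$ and the choice of $w$ can be arranged so the negative contribution strictly dominates. The cleanest route is to argue that $t\mapsto t^\alpha$ (respectively its signed and absolute variants) is not a linear combination of finitely many integer-power monomials, so that $f_\alpha[x]$ escapes the span of $x^{\circ 1},\dots,x^{\circ m}$ for generic distinct $x_i$; bookkeeping this uniformly across all residue classes of $\alpha$ modulo the integers is where the real care is needed. The sufficiency direction, by contrast, is essentially forced once one notices the reduction to positivity preservation of the second derivative.
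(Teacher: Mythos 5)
First, a point of reference: this paper never proves \cref{Tsuperadd} — it is imported verbatim from \cite[Theorem 5.1]{GKR-crit-2sided}, so there is no in-paper argument to compare against. Judged against the proof in that cited source, your reconstruction follows essentially the same (classical) route, going back to FitzGerald--Horn \cite{FitzHorn}: sufficiency for integer exponents via the binomial expansion and the Schur product theorem; sufficiency for $\alpha \geq n$ by integrating twice, so that Loewner super-additivity reduces to Loewner monotonicity of $f_\alpha'$ and then to positivity preservation by $f_\alpha'' = \alpha(\alpha-1) f_{\alpha-2}$, which \cref{Tcomplete} settles at the shifted threshold $\alpha - 2 \geq n-2$ (note that two differentiations indeed return $\psi$ to $\psi$ and $\phi$ to $\phi$); and necessity via the rank-one perturbation ${\bf 1}{\bf 1}^T + \epsilon^2 x x^T$ with a Vandermonde-orthogonal test vector isolating the fractional term $-\epsilon^{2\alpha}(w^T f_\alpha[x])^2$. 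Your closing observation — that both summands in the counterexample are rank one, so the ``moreover'' clause comes for free — is exactly how the rank-one refinement is obtained, and it is also the mechanism exploited in \eqref{Esuper_rank1} of this paper.

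Two loose ends in your necessity step should be closed in a full write-up. (i) Your enumeration covers $\alpha \in (0,n)\setminus\N$ and positive wrong-parity integers, but the claimed sets exclude all $\alpha \leq 0$ as well: negative odd integers for $\psi_\alpha$, negative even integers for $\phi_\alpha$, all negative reals, and $\alpha = 0$ must also be ruled out. Your expansion handles $\alpha < 0$ with no orthogonality conditions at all (the binomial series does not terminate for negative integers either, and $\epsilon^{2\alpha} \to \infty$ dominates the $O(\epsilon^2)$ tail), while $\alpha = 0$ is killed directly by $A = B = \Id_n$; but these cases must be stated. (ii) In the wrong-parity integer case, the clean way to make $(w^T x^{\circ\alpha})^2 - (w^T f_\alpha[x])^2$ negative is to impose $w \perp x^{\circ 1}, \dots, x^{\circ \alpha}$ while keeping $w^T f_\alpha[x] \neq 0$; this requires the $\alpha + 1$ evaluation vectors $x^{\circ 1}, \dots, x^{\circ\alpha}, f_\alpha[x]$ to be linearly independent in $\R^n$, hence $\alpha \leq n-1$ — which is exactly the range you need, since wrong-parity integers $\geq n$ already lie in $[n,\infty)$. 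That dimension count, together with the linear independence of $t, \dots, t^\alpha, \psi_\alpha(t)$ (resp.\ $\phi_\alpha(t)$) at mixed-sign nodes, is the entire content of the nondegeneracy step you flagged; it is routine for these piecewise-analytic functions but not optional. Finally, a small technical point in sufficiency: at the boundary case $\alpha = n = 2$, $\psi_2'$ is Lipschitz but $\psi_2''$ jumps at $0$, so the double-integration argument should be phrased via absolutely continuous first derivatives rather than pointwise second derivatives.
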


\begin{proof}[Sketch of the proof of \cref{Tchordal}]
We only prove the second part (see \cite{GKR-critG} for a complete proof). 
Suppose $f = \psi_\alpha$ or $\phi_\alpha$ for $\alpha \in \R$, and
$f[-]$ preserves positivity on $\bp_G$. Then clearly $f[-]$ preserves
positivity on $\bp_{G_{A \cup S}}$ and $\bp_{G_{B \cup S}}$. Moreover,
suppose there exist $v_1 \in A$, $v_2 \in B$, and a clique $C' \subset C$
of size $m$ such that $v_1$ and $v_2$ are adjacent to every vertex in
$C'$. Assume, without loss of generality, that the vertices of $G$ are
labelled in the following order: $v_1$, the $m$ vertices in $C'$, $v_2$,
and the remaining vertices of $G$. Now given vectors ${\bf u}, {\bf v}
\in \R^m$ and a $m \times m$ symmetric matrix $M$, define the matrix
\begin{equation}\label{Enk1}
W({\bf u}, {\bf v},M) := \begin{pmatrix} 1 & {\bf u}^T & 0\\ {\bf u} & M
& {\bf v}\\ 0 & {\bf v}^T & 1 \end{pmatrix}.
\end{equation}

\noindent Then $W({\bf u}, {\bf v}, {\bf u} {\bf u}^T + {\bf v} {\bf
v}^T) \oplus {\bf 0}_{|V| - (m+2)} \in \bp_G(\R)$, so by the
assumptions on $f$, we conclude that $f[W({\bf u}, {\bf v}, {\bf u} {\bf
u}^T + {\bf v} {\bf v}^T)] = W(f[{\bf u}], f[{\bf v}], f[{\bf u} {\bf
u}^T + {\bf v} {\bf v}^T]) \in \bp_{m+2}(\R)$. Using the factorization
\eqref{Emiracle_decomposition}, we conclude that 
\begin{equation}\label{Esuper_rank1}
f[{\bf u} {\bf u}^T + {\bf v} {\bf v}^T] - f[{\bf u}]f[{\bf u}^T] -
f[{\bf v}]f[{\bf v}^T] = f[{\bf u} {\bf u}^T + {\bf v} {\bf v}^T] -
f[{\bf uu^T}] - f[{\bf vv}^T] \geq 0.
\end{equation}

\noindent Thus $f = \psi_\alpha, \phi_\alpha$ is Loewner super-additive
on rank one matrices in $\bp_m$. By \cref{Tsuperadd},
we conclude that $f$ is Loewner super-additive on all of
$\bp_m$.
\end{proof}

We now provide a sketch of the proof of \cref{Tmain} (see \cite{GKR-critG} for full details).

\begin{proof}[Sketch of the proof of \cref{Tmain}]
Suppose $G$ is a chordal graph, which we may assume to be
connected. Let $r$ be as in the statement of the theorem.
One can show that \cref{Tcomplete} still holds when $K_n$ is
replaced by $K_n^{(1)}$. As a result, 
\begin{equation}\label{Einclusion}
\calh_G \subset \N \cup [r-2,\infty), \quad \calh_G^\psi \subset
(-1+2\N) \cup [r-2,\infty), \quad \calh_G^\phi = 2\N \cup [r-2,\infty).
\end{equation}

\noindent We now prove the reverse inclusions. By \cref{Tchordalcarac}, the maximal cliques of $G$ admit a perfect ordering
$\{C_1, \dots, C_k\}$. We will prove the reverse inclusions in
\eqref{Einclusion} by induction on $k$. If $k=1$, then $G$ is complete
and the inclusions clearly hold by \cref{Tcomplete}. Suppose the
result holds for all chordal graphs with $k=l$ maximal cliques, and let
$G$ be a graph with $k = l+1$ maximal cliques. For $1 \leq j \leq k$,
define 
\begin{equation}\label{EhistoriesC}
H_j := C_1 \cup \dots \cup C_j, \qquad C_j = C_j \setminus H_{j-1},
\qquad S_j = H_{j-1} \cap C_j
\end{equation}

\noindent as in \eqref{Ehistories}. By \cite[Lemma 2.11]{lauritzen}, the
triplet $(H_{k-1}, S_k, R_k)$ is a decomposition of $G$. Let $\alpha \in
[r-2,\infty)$. By the induction hypothesis, the three $\alpha$th power
functions preserve positivity on $\bp_{G_{H_{k-1} \cup S_k}} =
\bp_{G_{H_{k-1}} }$. Moreover, since $\alpha \geq r-2$, they also
preserve positivity on $\bp_{G_{C_k \cup S_k}} = \bp_{G_{C_k}}$. We now
claim that $r \geq |S_k| + 2$. Clearly, $|S_k| \leq r$ since $S_k$ is
complete. If $|S_k| = r$, then $C_k$ is contained in one of the previous
cliques, which is a contradiction.
Suppose instead that $|S_k| = r-1$. Since $\{C_1, \dots, C_k\}$ is a
perfect ordering, $S_k \subset C_i$ for some $i < k$. Let $v \in C_i
\setminus S_k$ and let $w \in R_k$. As $v, w$
are adjacent to every $s \in S_k$, the subgraph of $G$ induced by $S_k
\cup \{v,w\}$ is isomorphic to $K_{r+1}^{(1)}$, which contradicts the
definition of $r$. It follows that $r \geq |S_k| + 2$, as
claimed. 
Now by \cref{Tsuperadd},
the $\alpha$th power functions are Loewner super-additive on
$\bp_{S_k}$. Applying \cref{Tchordal}, we conclude that $\alpha
\in \calh_G^{\psi}, \calh_G^\phi$, and hence $\alpha \in \calh_G$. This
concludes the proof of the theorem. 
\end{proof}

The following corollary shows how to systematically compute
the critical exponent of a chordal graph. 

\begin{corollary}\label{Cformula}
Suppose $G = (V,E)$ is chordal, $V = \{ v_1, \dots, v_m
\}$, and denote the maximal cliques in $G$ by $C_1, \dots,
C_n$. Define the ``maximal clique matrix'' of $G$ to be $M(G) := ({\bf 1}(v_i \in C_j))$. Then the
critical exponent of $G$ equals the largest entry of $M(G)^T M(G) -
2 \Id_{|V|}$, i.e.,
\begin{equation}\label{Eformula}
CE_H(G) = CE^\psi_H(G) = CE^\phi_H(G) = \max_{i,j} (u_i^T u_j - 2
\delta_{i,j}),
\end{equation}

\noindent where $u_1, \dots, u_n \in \{ 0, 1 \}^m$
are the columns of $M(G)$.
\end{corollary}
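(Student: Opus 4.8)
The plan is to reduce everything to the identity $CE_H(G) = CE^\psi_H(G) = CE^\phi_H(G) = r-2$ supplied by \cref{Tmain}, where $r$ is the largest integer with $K_r^{(1)} \subseteq G$, and then to reinterpret the right-hand side of \eqref{Eformula} in purely combinatorial terms. First I would record that the $(i,j)$ entry of $M(G)^T M(G)$ is $u_i^T u_j = |C_i \cap C_j|$, since $u_i$ is the indicator vector of the clique $C_i$; in particular the diagonal terms contribute $u_i^T u_i - 2 = |C_i| - 2$ and the off-diagonal terms contribute $|C_i \cap C_j|$. Hence $\max_{i,j}(u_i^T u_j - 2\delta_{i,j}) = \max\bigl(\max_i |C_i| - 2,\ \max_{i \ne j} |C_i \cap C_j|\bigr)$, and using $\max(a-2,b) = \max(a,b+2) - 2$ it suffices to establish the combinatorial identity
\begin{equation*}
r = \max\Bigl(\max_i |C_i|,\ \max_{i \ne j}\bigl(|C_i \cap C_j| + 2\bigr)\Bigr),
\end{equation*}
which I would prove by two matching inequalities.

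For the lower bound I would exhibit a copy of $K_s^{(1)}$ for each candidate value $s$ appearing on the right. A maximal clique $C_i$ contains $K_{|C_i|} \supseteq K_{|C_i|}^{(1)}$, so $r \ge \max_i |C_i|$. For distinct maximal cliques $C_i, C_j$ with $t := |C_i \cap C_j|$, maximality forces $C_i \setminus C_j$ and $C_j \setminus C_i$ to be nonempty, so I may pick $v \in C_i \setminus C_j$ and $w \in C_j \setminus C_i$; the $t+2$ vertices $\{v,w\} \cup (C_i \cap C_j)$ are pairwise adjacent except possibly for the pair $\{v,w\}$, hence span a copy of $K_{t+2}^{(1)}$ and give $r \ge |C_i \cap C_j| + 2$.

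For the upper bound I would take a copy of $K_r^{(1)}$ on vertices $W = \{w_1,\dots,w_r\}$, all pairs adjacent in $G$ except (possibly) $\{w_1,w_2\}$. If $\{w_1,w_2\}$ happens to be an edge, then $W$ is a clique, contained in some maximal clique, so $r \le \max_i |C_i|$. Otherwise $A := W \setminus \{w_1\}$ and $B := W \setminus \{w_2\}$ are cliques; choosing maximal cliques $C_a \supseteq A$ and $C_b \supseteq B$, I note that $C_a$ cannot contain $w_1$ (else it would contain all of $W$, forcing $\{w_1,w_2\}$ to be an edge), so $w_1 \notin C_a$ while $w_2 \in A \subseteq C_a$; symmetrically $w_2 \notin C_b$. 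Thus $C_a \ne C_b$ and $C_a \cap C_b \supseteq A \cap B = \{w_3,\dots,w_r\}$, whence $r \le |C_a \cap C_b| + 2$. Combining the two cases gives the upper bound, the identity follows, and \eqref{Eformula} is proved.

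The step I expect to be the main obstacle is the upper bound, and precisely the subtlety that an embedded copy of $K_r^{(1)}$ need not realize its missing edge as a non-edge of $G$. The two cases above split exactly on whether the omitted pair $\{w_1,w_2\}$ is an edge of $G$; the delicate case is the second, where the genuine non-edge together with the maximality of $C_a$ and $C_b$ is what forces these two maximal cliques to be \emph{distinct} and to share the $(r-2)$-element set $\{w_3,\dots,w_r\}$.
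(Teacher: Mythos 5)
Your proposal is correct and takes essentially the route the paper intends: the extended abstract states Corollary~\ref{Cformula} without proof, deferring to the full version, where the result is derived exactly as you do --- from Theorem~\ref{Tmain} together with the combinatorial identity $r = \max\bigl(\max_i |C_i|,\ \max_{i \neq j}(|C_i \cap C_j| + 2)\bigr)$, with the two matching inequalities argued as in your proposal. Indeed, your lower-bound construction (adjoining $v \in C_i \setminus C_j$ and $w \in C_j \setminus C_i$ to $C_i \cap C_j$ to produce a $K_{t+2}^{(1)}$) is the same device the paper already uses inside its proof of Theorem~\ref{Tmain} to rule out $|S_k| = r-1$, and your case split in the upper bound correctly handles the subtlety that a subgraph copy of $K_r^{(1)}$ may have its missing edge present in $G$.
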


In particular, \cref{Cformula} can be used to compute the critical exponent of interval graphs, which are a well-known class of chordal graphs. Note that \cref{Tchordal} can be used to compute the critical exponents of several other important graphs; see \cref{Table_chordal}.

\begin{table}[H]
\centering
\begin{tabular}{|c|c|}
\hline
Graph $G$ & $CE_H(G),\ CE_H^\psi(G),\ CE_H^\phi(G)$ \\ \hline
Tree & 1 \\
Complete graph $K_n$ & $n-2$ \\
Minimal planar triangulation of $C_n$ for $n \geq 4$ & 2 \\
Apollonian graph, $n \geq 3$ & $\min(3,n-2)$ \\
Maximal outerplanar graph, $n \geq 3$ & $\min(2,n-2)$\\
Band graph with bandwidth $d \leq n$ & $\min(d,n-2)$\\
Split graph with maximal clique $C$ & $\max(|C|-2, \max \deg(V \setminus
C))$\\ \hline
\end{tabular}
\medskip
\caption{Critical exponents of important families of chordal graphs with
$n$ vertices.}
\label{Table_chordal}
\end{table}

\section{Non-chordal graphs: results and open problems}

Computing the set of powers preserving positivity on $\bp_G$ for general
non-chordal graphs $G$ still remains open. In this section, we mention
recent results along this direction, and conclude by outlining several
open questions.

As shown in \cref{Sproofs}, decompositions of graphs can be used
to make reductions when computing critical exponents of chordal graphs.
For non-chordal graphs, the decomposition process can still be iterated
until components cannot be decomposed anymore. The resulting components
are called the \textit{prime components} of the graphs. The following
result is akin to \cref{Tchordal} for non-chordal graphs. 

\begin{theorem}[{\cite[Theorem 4.1]{GKR-critG}}]\label{Tprime}
Let $G$ be a graph with a perfect ordering $\{B_1, \dots, B_k\}$ of its
prime components, and let $f : \R \to \R$ be such that $f(0) = 0$. Define $s := \max_{i=1, \dots, k} |S_i|$, 
where $S_i$ is defined as in \eqref{Ehistories}. If $f[-]$ preserves
positivity on $\bp_{B_i}$ for all $1 \leq i \leq k$ and is Loewner
super-additive on $\bp_{K_s}$, then $f[-]$ preserves positivity on
$\bp_G$.
\end{theorem}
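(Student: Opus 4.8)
The plan is to imitate the inductive proof of \cref{Tmain}, replacing maximal cliques by prime components: where that argument derived positivity preservation on each clique from \cref{Tcomplete}, here I would simply invoke the hypothesis that $f[-]$ preserves positivity on each $\bp_{B_i}$, and where it used super-additivity on clique cones I would use the hypothesis on $\bp_{K_s}$. I would induct on the number $k$ of prime components. The base case $k=1$ is immediate, since then $G = B_1$ and $\bp_G = \bp_{B_1}$, so the conclusion is the hypothesis itself. For the inductive step I would use the perfect ordering $\{B_1,\dots,B_k\}$: by (the prime-component analogue of) \cite[Lemma 2.11]{lauritzen}, the triple $(H_{k-1}\setminus S_k,\, S_k,\, R_k)$ is a decomposition of $G$, with $A \cup C = H_{k-1}$, $B \cup C = B_k$, and $C = S_k$ inducing a clique (this last property is built into the definition of a perfect ordering). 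The induced subgraph $G_{H_{k-1}}$ then has prime components $B_1,\dots,B_{k-1}$ with the inherited perfect ordering, whose separators are $S_2,\dots,S_{k-1}$, all of size at most $s$; hence the induction hypothesis applies to $G_{H_{k-1}}$ and yields that $f[-]$ preserves positivity on $\bp_{G_{H_{k-1}}}$. Positivity preservation on $\bp_{G_{B_k}} = \bp_{B_k}$ holds by assumption.

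Now fix $M \in \bp_G$. Using \cref{Ldecomp_chord} I would write $M = M_1 + M_2$ with $M_1 \in \bp_{G_{H_{k-1}}}$ and $M_2 \in \bp_{G_{B_k}}$, so that $f[M_1], f[M_2] \in \bp_G$ by the previous paragraph. The heart of the argument is a block computation showing that $f[M] - f[M_1] - f[M_2]$ is supported entirely on the $C \times C$ block. Indeed, since $C$ separates $A$ from $B$ we have $M_{AB} = 0$; and because $f(0) = 0$, every block other than $C \times C$ vanishes, as the $A$-diagonal and $(A,C)$ blocks are carried entirely by $M_1$, the $B$-diagonal and $(B,C)$ blocks entirely by $M_2$, and the $(A,B)$ block is zero throughout. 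On the $C \times C$ block the difference equals $f[P+Q] - f[P] - f[Q]$, where $P = M_{AC}^T M_{AA}^{-1} M_{AC}$ and $Q = M_{CC} - P$ are the two positive semidefinite summands of $M_{CC}$ produced by \cref{Ldecomp_chord}. Since $|C| = |S_k| \le s$, and since Loewner super-additivity on $\bp_{K_s}$ restricts --- by zero-padding and $f(0)=0$ --- to Loewner super-additivity on $\bp_{K_{|C|}}$, this block is positive semidefinite. Therefore $f[M] = f[M_1] + f[M_2] + (f[M] - f[M_1] - f[M_2])$ is a sum of three positive semidefinite matrices; as $f(0)=0$ forces the correct zero pattern, we conclude $f[M] \in \bp_G$.

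The main obstacle is the invertibility requirement on $M_{AA}$ and $M_{BB}$ in \cref{Ldecomp_chord}, which a general $M \in \bp_G$ need not meet. I would resolve this exactly as in the complete proof, by passing to the Moore--Penrose pseudoinverse: set $P = M_{AC}^T M_{AA}^+ M_{AC}$ and $Q = M_{CC} - P$. For $M$ positive semidefinite the column space of $M_{AC}$ lies in that of $M_{AA}$, which keeps the two block summands (and the generalized Schur complement $Q$) positive semidefinite, so the decomposition $M = M_1 + M_2$ and the super-additivity identity above go through unchanged. The remaining verifications --- that the prime components and separators of $G$ restrict correctly to $G_{H_{k-1}}$, and that Loewner super-additivity on $\bp_{K_s}$ descends to every smaller clique cone --- are routine, and constitute the only point at which the single number $s = \max_i |S_i|$, rather than the individual separator sizes, is used.
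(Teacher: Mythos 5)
Correct, and essentially the same approach as the paper's: although this extended abstract defers the proof of \cref{Tprime} to \cite[Theorem 4.1]{GKR-critG}, your induction on the perfect ordering via the decomposition $(H_{k-1}\setminus S_k,\, S_k,\, R_k)$, the splitting of \cref{Ldecomp_chord}, and Loewner super-additivity on the separator clique (reduced from $\bp_{K_s}$ by zero-padding, using $f(0)=0$) is precisely the mechanism of \cref{Tchordal}(1) combined with the inductive scheme in the paper's sketch of \cref{Tmain}. Your Moore--Penrose pseudoinverse treatment of the invertibility hypothesis in \cref{Ldecomp_chord} correctly supplies the one technical point the paper's sketch glosses over.
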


\subsection{Cycles and bipartite graphs}

Chordal graphs are graphs without induced cycles of length $4$ or more. A
next natural step is thus to examine the case of cycles. In
recent work \cite{GKR-critG}, we show:

\begin{theorem}[{\cite[Proposition 4.3]{GKR-critG}}]\label{Tcycle}
For all $n \geq 3$, 
$\calh_{C_n} = \calh_{C_n}^\psi = [1,\infty), \textrm{ and }
\calh^\phi_{C_4} = [2,\infty)$.
Moreover, for $n>4$, $[2,\infty)
\subset \calh_{C_n}^\phi \subset [1,\infty)$, with $1 \notin
\calh_{C_n}^\phi$ for $n$ even.
\end{theorem}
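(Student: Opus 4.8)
The plan is to reduce the entire statement to a single scalar inequality about the determinant of a cyclic tridiagonal matrix, and then read off the three regimes from the sign of one distinguished term. The case $n=3$ is $C_3 = K_3$, immediate from \cref{Tcomplete}, so I assume $n \geq 4$. First I would normalize: conjugating by a positive diagonal matrix $D$ satisfies $\psi_\alpha[DAD] = D^\alpha \psi_\alpha[A] D^\alpha$ (and likewise for $\phi_\alpha$), so it preserves all three membership questions; hence I may take $A \in \bp_{C_n}(\R)$ positive definite with unit diagonal (the boundary following by continuity), i.e. $A = I + (\text{cyclic tridiagonal part})$ with off-diagonal entries $b_1, \dots, b_n$, the entry $b_n$ joining vertices $n$ and $1$. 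The crucial structural observation is that every proper principal submatrix of a $C_n$-matrix is supported on an induced disjoint union of paths, hence is block diagonal with path blocks; since a path is a tree and therefore chordal, \cref{Tmain} gives $\calh_{P_m} = \calh^\psi_{P_m} = \calh^\phi_{P_m} = [1,\infty)$, so all \emph{proper} principal minors of $\psi_\alpha[A]$ and $\phi_\alpha[A]$ are nonnegative for every $\alpha \geq 1$. As a symmetric matrix is positive semidefinite iff all its principal minors are nonnegative, the whole problem collapses to controlling the single full determinant $\det \psi_\alpha[A]$ (resp. $\det \phi_\alpha[A]$).

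For the upper bounds I would use induced-subgraph monotonicity: $P_3 = K_3^{(1)}$ is an induced subgraph of $C_n$, and padding by zeros embeds $\bp_{P_3}$ into $\bp_{C_n}$, giving $\calh_{C_n} \subseteq \calh_{P_3} = [1,\infty)$ and identically for the $\psi$- and $\phi$-versions. To sharpen the $\phi$ story I would record the periodic-Jacobi identity $\det A = F(b_1^2,\dots,b_n^2) - 2(-1)^n \prod_{i=1}^n b_i$, where the continuant-type polynomial $F$ depends only on the squares $b_i^2$ and is nonincreasing in each argument on the region where all path minors are nonnegative (its partials are, up to sign, products of path minors). Since only the last term sees the signs, a \emph{frustrated} boundary matrix (one with $\prod_i \mathrm{sgn}(b_i) = -1$, full support, and $\det A = 0$) satisfies $\det\phi_1[A] = \det A - 4\prod_i|b_i| = -4\prod_i|b_i| < 0$ for even $n$, proving $1 \notin \calh^\phi_{C_n}$; for $C_4$ the one-parameter computation pins the threshold exactly at $2$. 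Such matrices are easy to exhibit through the bipartite block form $\bigl(\begin{smallmatrix} I & B \\ B^T & I\end{smallmatrix}\bigr)$ of an even cycle, choosing $B$ with $\|B\| = 1$ but $\bigl\||B|\bigr\| > 1$.

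For the lower bounds I would exploit $F$'s monotonicity together with $|b_i|^{2\alpha} \leq b_i^2$ and $\prod_i|b_i|^\alpha \leq \prod_i|b_i|$ for $|b_i|\leq 1$, $\alpha\geq 1$. Writing $\sigma := (-1)^n\,\mathrm{sgn}(\prod_i b_i)$, one has $\det\psi_\alpha[A] = F(|b_i|^{2\alpha}) - 2\sigma\prod_i|b_i|^\alpha$, and $\psi_\alpha$ preserves $\sigma$. If $\sigma = +1$ the distinguished term is adverse and the two monotonicities combine to give $\det\psi_\alpha[A] \geq F(b_i^2) - 2\prod_i|b_i| = \det A \geq 0$. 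If $\sigma = -1$ the term is favourable, and here I would invoke the key lemma that a frustrated $C_n$-matrix ($n\geq 4$) all of whose path minors are nonnegative has nonnegative determinant automatically; combined with the preserved proper minors this yields $\det\psi_\alpha[A]\geq 0$. This settles $\calh_{C_n} = \calh^\psi_{C_n} = [1,\infty)$. For $\phi_\alpha$ the product is forced nonnegative, so the effective sign is $(-1)^n$: for odd $n$ it is favourable (the $\sigma=-1$ lemma again applies, consistent with $1\in\calh^\phi_{C_n}$), while for even $n$ it is adverse, and there I would compare against the base point $\alpha = 2$ rather than $\alpha=1$, using $\phi_2[A] = A^{\circ 2} \succeq 0$ by the Schur product theorem: the same monotonicities give $\det\phi_\alpha[A] \geq \det\phi_2[A] \geq 0$ for all $\alpha \geq 2$, i.e. $[2,\infty)\subseteq\calh^\phi_{C_n}$.

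The main obstacle is the frustrated-determinant lemma: the assertion that, in the favourable-sign regime, closing the cycle imposes no constraint beyond the path constraints. I expect to prove it by induction on $n$ via the reduction $C_n \to C_{n-1}$, since Schur-complementing out a single cycle vertex turns a $C_n$-matrix into a $C_{n-1}$-matrix (the two neighbours of the deleted vertex become adjacent), with base case $C_3 = K_3$ supplied by \cref{Tcomplete}; alternatively one can seek an explicit sum-of-squares factorization generalizing the $C_4$ identity $\det = (1-2s^2)(1-2t^2)$. The delicate point is that the determinant and the path minors must be shown to interlock correctly across the induction, since — as the $K_3$ example shows, where unit-diagonal edge minors can be nonnegative while $\det < 0$ — the implication genuinely fails the moment the proper submatrices cease to be paths, which is exactly why the argument is confined to $n \geq 4$.
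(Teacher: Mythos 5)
Your scaffolding is largely sound, and several pieces of it are genuinely nice: the reduction of all proper principal minors to induced unions of paths (chordal, so covered by \cref{Tmain} for $\alpha \geq 1$), the identity $\det A = F(b_1^2,\dots,b_n^2) - 2(-1)^n\prod_i b_i$, the observation that $\partial F/\partial(b_k^2)$ is minus a path minor, the adverse-sign comparison $\det\psi_\alpha[A] \geq \det A$ when $\sigma = +1$, the frustrated even-cycle counterexample showing $1 \notin \calh^\phi_{C_n}$, the $C_4$ threshold computation, and the comparison to $\phi_2[A] = A \circ A \succeq 0$ for even $n$, $\alpha \geq 2$. (For the record, this extended abstract contains no proof of \cref{Tcycle} -- it is quoted from \cite[Proposition 4.3]{GKR-critG} -- so your argument has to stand on its own.) But it does not stand: the load-bearing \emph{frustrated-determinant lemma} is false. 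Take $n=4$, unit diagonal, $b_1 = -\tfrac{1}{\sqrt{2}}$, $b_2 = b_3 = \tfrac{1}{\sqrt{2}}$, $b_4 = \tfrac{\sqrt{2}}{3}$, i.e.
\begin{equation*}
B = \begin{pmatrix} 1 & -\tfrac{1}{\sqrt{2}} & 0 & \tfrac{\sqrt{2}}{3} \\ -\tfrac{1}{\sqrt{2}} & 1 & \tfrac{1}{\sqrt{2}} & 0 \\ 0 & \tfrac{1}{\sqrt{2}} & 1 & \tfrac{1}{\sqrt{2}} \\ \tfrac{\sqrt{2}}{3} & 0 & \tfrac{1}{\sqrt{2}} & 1 \end{pmatrix}.
\end{equation*}
This matrix is frustrated ($\prod_i \mathrm{sgn}(b_i) = -1$, so $\sigma = -1$ and the cycle term is favourable), and \emph{every} proper principal minor is nonnegative (the four $3\times 3$ path minors are $0, 0, \tfrac{5}{18}, \tfrac{5}{18}$; the $2 \times 2$ minors are $\tfrac12, \tfrac79, 1$), yet $\det B = -\tfrac{1}{36} < 0$. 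More generally, with $b_1^2 = b_2^2 = b_3^2 = \tfrac12$ and $b_4^2 = 2u^2$ one gets $\det = -(u - \tfrac12)^2$, so the failure is robust (perturb to make all minors strictly positive). The same phenomenon occurs for odd cycles with \emph{all-positive} entries: for $C_5$ with four off-diagonal entries $\sqrt{t}$, $t = (3-\sqrt5)/2$, and one entry $\sqrt{s}$ with $0 < s < t$, all proper principal minors are nonnegative while $\det = -(1-2t)\,(\sqrt{s}-\sqrt{t}\,)^2 < 0$. So closing the cycle \emph{does} impose a constraint beyond the path constraints, even in the favourable-sign regime, and no induction on Schur complements or sum-of-squares identity can rescue a false statement. (These matrices do not contradict \cref{Tcycle}: pulling back an $\alpha$-th root would violate a path minor, so they are not entrywise powers of p.s.d.~cycle matrices -- which is precisely the information your reduction throws away.)

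The damage is central, not peripheral: the $\sigma = -1$ case is exactly where the lemma is invoked, and it covers all of $\calh_{C_n}$ and $\calh^\psi_{C_n}$ for odd $n$ (nonnegative entries force $\sigma = (-1)^n = -1$) as well as the frustrated matrices for $\psi_\alpha$ on every cycle -- i.e., the heart of the assertion $\calh_{C_n} = \calh^\psi_{C_n} = [1,\infty)$. Only the upper bounds, the $\sigma = +1$ comparison, and the even-$n$ $\phi$ statements survive. There was also a warning sign you could have caught internally: your lemma, if true, would give $\calh^\phi_{C_n} = [1,\infty)$ for all odd $n$ (the sign is favourable for every $\alpha \geq 1$), which is strictly stronger than \cref{Tcycle}, where for odd $n > 4$ the interval $[1,2)$ is deliberately left undetermined. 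A lemma that effortlessly resolves the part of the theorem the authors left open should have prompted a search for counterexamples. To repair the argument you must retain, in the favourable regime, quantitative information tying the powered entries back to positive semidefiniteness of the original matrix $A$ -- for instance via the eigenvalue/transfer-matrix structure of cyclic matrices or the decomposition machinery of \cref{Tprime} -- rather than reducing to path minors alone.
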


Note that \cref{Tcycle} is in line with \cref{Tmain} as
$r=3$ is the biggest integer such that $K_r^{(1)}$ is contained
in $C_n$. 

Another very common family of non-chordal graphs is the bipartite graphs. 

\begin{theorem}\label{Tbipartite}
Suppose $G$ is a connected bipartite graph with at least $3$ vertices.
Then,
\[
\calh_G = [1,\infty), \qquad [2,\infty) \subset
\calh^\phi_G \subset [1,\infty),
\qquad \{ 1 \} \cup [3,\infty) \subset \calh^\psi_G \subset [1,\infty).
\]

\noindent If moreover $K_{2,2} \subset G \subset K_{2,m}$ for some $m
\geq 2$, then 
\[
\calh^\phi_G = [2,\infty), \qquad \{ 1 \} \cup [2,\infty)
\subset \calh^\psi_G \subset [1,\infty).
\]
\end{theorem}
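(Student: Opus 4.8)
The plan is to exploit the bipartite block structure to reduce every membership question to a statement about operator norms of entrywise powers, and then to prove the required norm bounds by a Perron--Frobenius argument. Fix the (unique) bipartition $V = X \sqcup Y$ of the connected graph $G$. Every $A \in \bp_G(\R)$ then has the block form $A = \begin{pmatrix} D_X & B \\ B^T & D_Y \end{pmatrix}$, where $D_X, D_Y$ are nonnegative diagonal matrices (there are no edges inside $X$ or inside $Y$) and $B$ is supported on the bipartite adjacency. After discarding zero rows/columns and using a continuity argument to reduce to the case $D_X, D_Y \succ 0$, I would conjugate by $\mathrm{diag}(D_X^{1/2}, D_Y^{1/2})$ and take a Schur complement to obtain, for $f \in \{x^\alpha, \psi_\alpha, \phi_\alpha\}$, the equivalence
\[
f[A] \in \bp_G \iff \|f[C]\|_{\mathrm{op}} \le 1, \qquad C := D_X^{-1/2} B D_Y^{-1/2},
\]
where $f[C]$ is applied entrywise and the normalization turns the power on the diagonal blocks into the identity (here I use that the diagonal entries are nonnegative, so $\psi_\alpha$ and $\phi_\alpha$ both act as $x \mapsto x^\alpha$ there). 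Since $A \in \bp_G \iff \|C\|_{\mathrm{op}} \le 1$, the whole theorem reduces to: for which $\alpha$ does $\|C\|_{\mathrm{op}} \le 1$ force $\|f[C]\|_{\mathrm{op}} \le 1$, as $C$ ranges over nonnegative (for $\calh_G$) or arbitrary real (for $\calh_G^\psi, \calh_G^\phi$) matrices?

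For the upper bounds I would use monotonicity under induced subgraphs: if $H$ is an induced subgraph of $G$, zero-padding embeds $\bp_H$ into $\bp_G$, so $\calh_G \subseteq \calh_H$, and likewise for the $\psi,\phi$ variants. Because $G$ is connected with at least three vertices, some vertex has two neighbours lying in the same part, so $G$ contains an induced $P_3 = K_3^{(1)}$; the $K_n^{(1)}$-analogue of \cref{Tcomplete} gives $\calh_{K_3^{(1)}} = \calh_{K_3^{(1)}}^\psi = \calh_{K_3^{(1)}}^\phi = [1,\infty)$, whence all three $\calh$-sets of $G$ lie in $[1,\infty)$. In the special case $K_{2,2} \subseteq G \subseteq K_{2,m}$, the $K_{2,2}$ is an induced $C_4$, and \cref{Tcycle} yields the sharper $\calh_G^\phi \subseteq \calh_{C_4}^\phi = [2,\infty)$.

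The analytic core is the lower bounds, for which I would establish the lemma that, for entrywise nonnegative $N$ and $\alpha \ge 1$, $\rho(N^{\circ\alpha}) \le \rho(N)^\alpha$. Assuming $N$ irreducible (the general case follows by perturbing $N \mapsto N + \epsilon \mathbf{1}\mathbf{1}^T$ and using continuity of $\rho$), let $v > 0$ satisfy $Nv = \rho(N) v$; testing the weighted row-sum bound $\rho(M) \le \max_i \frac{(Mw)_i}{w_i}$ with $w = v^{\circ\alpha}$ and invoking superadditivity $\sum_j t_j^\alpha \le (\sum_j t_j)^\alpha$ (valid for $\alpha \ge 1$, $t_j \ge 0$) gives the claim. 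Combined with the entrywise estimate $C^{\circ\alpha}(C^{\circ\alpha})^T \le (CC^T)^{\circ\alpha}$ (again superadditivity) and Perron--Frobenius monotonicity of $\rho$, this yields $\|C^{\circ\alpha}\|_{\mathrm{op}} \le \|C\|_{\mathrm{op}}^\alpha$ for nonnegative $C$ and $\alpha \ge 1$, proving $[1,\infty) \subseteq \calh_G$. For $\calh_G^\phi$ I would first prove $\|C^{\circ 2}\|_{\mathrm{op}} \le \|C\|_{\mathrm{op}}^2$ for arbitrary real $C$ by bounding the row sums of $C^{\circ 2}(C^{\circ 2})^T$ by products of the diagonal entries of $CC^T$ and $C^TC$; writing $|C|^{\circ\alpha} = (C^{\circ 2})^{\circ(\alpha/2)}$ and applying the lemma with exponent $\alpha/2 \ge 1$ then gives $\||C|^{\circ\alpha}\|_{\mathrm{op}} \le 1$ for $\alpha \ge 2$, i.e.\ $[2,\infty) \subseteq \calh_G^\phi$. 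Finally, since $|\psi_\alpha[C]| = |C|^{\circ\alpha}$ entrywise and $\|M\|_{\mathrm{op}} \le \||M|\|_{\mathrm{op}}$ for every matrix $M$, the same bound gives $\|\psi_\alpha[C]\|_{\mathrm{op}} \le \||C|^{\circ\alpha}\|_{\mathrm{op}} \le 1$ for $\alpha \ge 2$, while $\alpha = 1$ is immediate as $\psi_1[C] = C$; this proves $\{1\} \cup [2,\infty) \subseteq \calh_G^\psi$, which contains the asserted $\{1\} \cup [3,\infty)$. The special-case equality $\calh_G^\phi = [2,\infty)$ then follows by combining the lower bound with the $C_4$ upper bound above.

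I expect the main obstacle to be the passage from $\|C\|_{\mathrm{op}} \le 1$ to $\|f[C]\|_{\mathrm{op}} \le 1$ for the even and odd extensions, where the interaction of signs and absolute values must be controlled: the argument hinges on chaining the superadditivity inequality, the entrywise domination of Gram matrices, and Perron--Frobenius monotonicity, all of which require care with irreducibility (handled by an approximation argument) and with the degenerate case of vanishing diagonal entries (handled by continuity of the entrywise power for $\alpha > 0$).
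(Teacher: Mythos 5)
Your proposal is sound, and it is worth noting at the outset that this extended abstract does not actually contain a proof of \cref{Tbipartite} --- the paper defers all details to the journal versions \cite{GKR-critG, Guillot_Khare_Rajaratnam2012} --- so the natural comparison is with the machinery the paper does develop, namely graph decompositions plus Loewner super-additivity (\cref{Tchordal}, \cref{Tprime}). Your route bypasses that machinery entirely: the bipartite block form $\begin{pmatrix} D_X & B \\ B^T & D_Y \end{pmatrix}$ with diagonal corner blocks, together with the Schur-complement normalization, converts every membership question into the operator-norm problem ``does $\|C\|_{\mathrm{op}} \le 1$ force $\|f[C]\|_{\mathrm{op}} \le 1$,'' which is exactly the right reduction here (entrywise powers commute with the diagonal rescaling because the scaling factors are positive, and a zero diagonal entry of a p.s.d.\ matrix forces a zero row, so even the continuity argument you invoke can be replaced by simple row deletion). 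The analytic chain checks out at every link: the Perron--Frobenius lemma $\rho(N^{\circ\alpha}) \le \rho(N)^\alpha$ for nonnegative $N$ and $\alpha \ge 1$ via superadditivity of $t \mapsto t^\alpha$; the entrywise Gram domination $C^{\circ\alpha}(C^{\circ\alpha})^T \le (CC^T)^{\circ\alpha}$ plus monotonicity of $\rho$ on nonnegative matrices, giving $[1,\infty) \subseteq \calh_G$; the bound $\|C^{\circ 2}\|_{\mathrm{op}} \le \|C\|_{\mathrm{op}}^2$ for \emph{arbitrary} real $C$ (correctly proved without passing through $\||C|\|_{\mathrm{op}}$, which can exceed $\|C\|_{\mathrm{op}}$); and $\|M\|_{\mathrm{op}} \le \||M|\|_{\mathrm{op}}$ to handle $\psi_\alpha$. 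The upper bounds are also handled with the paper's own results: a connected triangle-free graph on at least $3$ vertices contains an \emph{induced} $P_3 = K_3^{(1)}$, whose three $\calh$-sets equal $[1,\infty)$ (this follows from \cref{Tmain} itself, since $P_3$ is chordal with $r=3$), and the induced $C_4$ inside $K_{2,2} \subset G \subset K_{2,m}$ combined with \cref{Tcycle} gives $\calh_G^\phi \subseteq [2,\infty)$. What your approach buys, beyond self-containedness, is a genuinely stronger conclusion: the chain $\|\psi_\alpha[C]\|_{\mathrm{op}} \le \||C|^{\circ\alpha}\|_{\mathrm{op}} = \|(C^{\circ 2})^{\circ \alpha/2}\|_{\mathrm{op}} \le \|C\|_{\mathrm{op}}^\alpha$ establishes $\{1\} \cup [2,\infty) \subseteq \calh_G^\psi$ for \emph{every} connected bipartite $G$, not merely the $K_{2,m}$ subcase; since the theorem asserts only one-sided inclusions for $\calh_G^\psi$, this improvement is consistent with the statement and proves it a fortiori. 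The decomposition/super-additivity framework of the abstract, by contrast, is better adapted to chordal and near-chordal graphs (where separators are cliques) and would be awkward here, since the prime components of a general bipartite graph need not be tractable; your operator-norm reduction is the natural tool for the bipartite case.
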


Akin to \cref{Tcycle}, note that \cref{Tbipartite} is also in
agreement with \cref{Tmain}. Moreover, \cref{Tbipartite} has a surprising conclusion: broad families of
dense graphs such as complete bipartite graphs can have small critical
exponents that do not grow with the number of vertices.
This has important applications in high-dimensional statistics:
for appropriate structures of zeros, small powers can be used to
minimally modify the entries of covariance matrices to
improve their properties (see Introduction), while maintaining
positivity. Note also that the result is in sharp contrast to the general
case (\cref{Tcomp}), where there is no underlying structure of
zeros.

\subsection{Concluding remarks and open problems}

The critical exponent of several other graphs (including coalescences of
graphs and graphs obtained by pasting cycles to other graphs) were
computed in \cite{GKR-critG}. However, the critical
exponent is unknown for general graphs; it appears that new ideas in
algebra, combinatorics, and convex geometry will be required to 
solve the question completely. We conclude this short paper by
formulating some open problems that we hope will stimulate research at
the intersection of these three areas.\medskip

\pagebreak
\noindent \textbf{Open problems:}
\begin{enumerate}
\item[1)] Every graph $G$ for which $CE_H(G)$ is currently known
satisfies $CE_H(G) = r-2$ where $r$ is the largest integer such that $G$
contains $K_r^{(1)}$ as a subgraph. Does this equality
in fact hold for every graph?
\item[2)] It appears that the critical exponent of a graph is always an
integer. Can this be proved directly without explicitly computing
critical exponents?
\item[3)] If $G'$ is obtained from $G$ by adding a new vertex to $G$, and
connecting it to every vertex of $G$, is it true that $CE_H(G') \leq
CE_H(G) + 1$?
\end{enumerate}




%
%




\begin{thebibliography}{88}

\bibitem{Agler_et_al_88}
J.~Agler, J.W.~Helton, S.~McCullough, and L.~Rodman.
\newblock ``Positive semidefinite matrices with a given sparsity
pattern''.
\newblock In: {\em Proceedings of the {V}ictoria {C}onference on
{C}ombinatorial {M}atrix {A}nalysis ({V}ictoria, {BC}, 1987)}, volume
107, pp.~101--149, 1988.

\bibitem{BGKP-fpsac}
A.~Belton, D.~Guillot, A.~Khare, and M.~Putinar.
\newblock ``Schur polynomials and positivity preservers''.
\newblock In:
\href{http://www.lix.polytechnique.fr/~pilaud/FPSAC16/listPapersWithAbstracts.html}{\em FPSAC 2016 Proceedings (DMTCS)}, Volume BC (2016), pp.~155--166.

\bibitem{BGKP-fixeddim}
A.~Belton, D.~Guillot, A.~Khare, and M.~Putinar.
\newblock ``Matrix positivity preservers in fixed dimension. I''.
\newblock In: \href{http://dx.doi.org/10.1016/j.aim.2016.04.016} {\em
Adv. Math.} 298 (2016), pp.~325--368.

\bibitem{BGKP-hankel}
A.~Belton, D.~Guillot, A.~Khare, and M.~Putinar.
\newblock ``Moment-sequence transforms''.
\newblock {\em Preprint} (2016), \href{https://arxiv.org/abs/1610.05740}{arXiv:1610.05740}.

\bibitem{Bala-PD-completions}
E.~Ben-David and B.~Rajaratnam.
\newblock ``Positive definite completion problems for Bayesian networks''.
\newblock In: \href{http://dx.doi.org/10.1137/110861051}{\em SIAM J.
Matrix Anal. Appl.} 33.2 (2012), pp.~617--638.

\bibitem{Bhatia-Elsner}
R.~Bhatia and L.~Elsner.
\newblock ``Positivity preserving {H}adamard matrix functions''.
\newblock In: \href{http://dx.doi.org/10.1007/s11117-007-2104-8}{\em
Positivity} 11.4 (2007), pp.~583--588.

\bibitem{bickel_levina}
P.J.~Bickel and E.~Levina.
\newblock ``Covariance regularization by thresholding''.
\newblock In: \href{http://dx.doi.org/10.1214/08-AOS600}{\em Ann.
  Statist.} 36.6 (2008), pp.~2577--2604.

\bibitem{Brualdi_mutually}
R.A.~Brualdi.
\newblock {\em The mutually beneficial relationship of graphs and
matrices}. Volume~115 of {\em CBMS Regional Conference Series in
Mathematics}.
\newblock American Mathematical Society, Providence, 2011.

\bibitem{Diestel}
R.~Diestel.
\newblock {\em Graph theory}, third edition. Volume~173 of {\em Graduate
  Texts in Mathematics}.
\newblock Springer-Verlag, Berlin, 2005, pp.~xvi+411.

\bibitem{Fallat-Hogben}
S.M.~Fallat and L.~Hogben.
\newblock ``The minimum rank of symmetric matrices described by a graph:
  a survey''.
\newblock In: \href{http://dx.doi.org/10.1016/j.laa.2007.05.036}{\em
Linear Algebra Appl.} 426.2 (2007), pp.~558--582.

\bibitem{FitzHorn}
C.H.~Fitz{G}erald and R.A.~Horn.
\newblock ``On fractional Hadamard powers of positive definite
  matrices''.
\newblock In: \href{http://dx.doi.org/10.1016/0022-247X(77)90167-6}{\em
  J. Math. Anal. Appl.} 61.3 (1977), pp.~633--642.

\bibitem{golumbic}
M.C.~Golumbic.
\newblock {\em Algorithmic graph theory and perfect graphs}, second
  edition. Volume~57 of {\em Annals of Discrete Mathematics}.
\newblock With a foreword by Claude Berge.
\newblock Elsevier Science B.V., Amsterdam, 2004, pp.~xxvi+314.

\bibitem{Grone-PD-completions}
R.~Grone, C.R.~Johnson, E.M.~de~S{\'a}, and H.~Wolkowicz.
\newblock ``Positive definite completions of partial {H}ermitian
matrices''.
\newblock In: \href{http://dx.doi.org/10.1016/0024-3795(84)90207-6}{\em
Linear Algebra Appl.} 58 (1984), pp.~109--124.

\bibitem{GKR-crit-2sided}
D.~Guillot, A.~Khare, and B.~Rajaratnam.
\newblock ``Complete characterization of Hadamard powers preserving
  Loewner positivity, monotonicity, and convexity''.
\newblock In: \href{http://dx.doi.org/10.1016/j.jmaa.2014.12.048}{\em J.
  Math. Anal. Appl.} 425.1 (2015), pp.~489--507.

\bibitem{GKR-critG}
D.~Guillot, A.~Khare, and B.~Rajaratnam.
\newblock ``Critical exponents of graphs''.
\newblock In: \href{http://dx.doi.org/10.1016/j.jcta.2015.11.003}{\em J.
Combin. Theory Ser. A} 139 (2016), pp.~30--58.

\bibitem{Guillot_Khare_Rajaratnam2012}
D.~Guillot, A.~Khare, and B.~Rajaratnam.
\newblock ``Preserving positivity for matrices with sparsity
  constraints''.
\newblock In: \href{http://dx.doi.org/10.1090/tran6669}{\em Trans. Amer.
Math. Soc.} 368.12 (2016), pp.~8929--8953.

\bibitem{GKR-lowrank}
D.~Guillot, A.~Khare, and B.~Rajaratnam.
\newblock ``Preserving positivity for rank-constrained matrices''.
\newblock In: \href{http://dx.doi.org/10.1090/tran/6826}{\em Trans. Amer.
Math. Soc.}, to appear, 2017.

\bibitem{Guillot_Rajaratnam2012}
D.~Guillot and B.~Rajaratnam.
\newblock ``Retaining positive definiteness in thresholded matrices''.
\newblock In: \href{http://dx.doi.org/10.1016/j.laa.2012.01.013}{\em
Linear Algebra Appl.} 436.11 (2012), pp.~4143--4160.

\bibitem{Guillot_Rajaratnam2012b}
D.~Guillot and B.~Rajaratnam.
\newblock ``Functions preserving positive definiteness for sparse
matrices''.
\newblock In:
\href{http://dx.doi.org/10.1090/S0002-9947-2014-06183-7}{\em Trans. Amer.
Math. Soc.} 367.1 (2015), pp.~627--649.

\bibitem{Hiai2009}
F.~Hiai.
\newblock ``Monotonicity for entrywise functions of matrices''.
\newblock In: \href{http://dx.doi.org/10.1016/j.laa.2009.04.001}{\em
  Linear Algebra Appl.} 431.8 (2009), pp.~1125--1146.

\bibitem{lauritzen}
S.L.~Lauritzen.
\newblock {\em Graphical models}. Volume~17 of {\em Oxford Statistical Science
  Series}.
\newblock Oxford Science Publications.
\newblock Clarendon Press Oxford University Press, New York, 1996.

\bibitem{Li_Horvath}
A.~Li and S.~Horvath.
\newblock ``Network neighborhood analysis with the multi-node topological
  overlap measure''.
\newblock In:
\href{http://bioinformatics.oxfordjournals.org/content/23/2.toc}{\em
  Bioinformatics}, 23.2 (2007), pp.~222--231.

\bibitem{paulsen_et_al}
V.I.~Paulsen, S.C.~Power, and R.R.~Smith.
\newblock ``Schur products and matrix completions''.
\newblock In: \href{http://dx.doi.org/10.1016/0022-1236(89)90050-5}{\em
J. Funct. Anal.}, 85.1 (1989), pp.~151--178.

\bibitem{Zhang_Horvath}
B.~Zhang and S.~Horvath.
\newblock ``A general framework for weighted gene co-expression network
analysis''.
\newblock In: \href{http://dx.doi.org/10.2202/1544-6115.1128}{\em Stat.
Appl. Genet. Mol. Biol.}, 4 (2005), Art. 17, 45 pp.
\end{thebibliography}
\end{document}